\newtheorem{theorem}{Theorem}[section]
\newtheorem*{conj}{Conjecture}
\newtheorem{lemma}[theorem]{Lemma}
\newtheorem{proposition}[theorem]{Proposition}
\newtheorem{definition}{Definition}[section]
\newcommand{\C}{\mathbb{C}}
\newcommand{\N}{\mathbb{N}}
\newcommand{\Z}{\mathbb{Z}}
\newcommand{\GL}{\mathrm{GL}}
\newcommand{\SO}{\mathrm{SO}}
\newcommand{\SL}{\mathrm{SL}}
\newcommand{\SU}{\mathrm{SU}}
\newcommand{\semi}{\cdot_{\rtimes}}
\begin{document}

\title[Hyperbolic Integral Homology Spheres and $2I$ Representations]{Hyperbolic Integral Homology Spheres and Binary Icosahedral Representations}

\author{Maria Stuebner}
\email{ms6166@columbia.edu}

\vspace*{-1.5cm}
\maketitle

\begin{abstract}
This paper examines the representations of hyperbolic integral homology spheres into the binary icosahedral group $2I$. We specifically give a geometric meaning to $2I$ representations by relating them to Larsen's notion of \textit{quotient dimension}, which gives us a sense of the frequency of regular finite covers. Our main theorem shows that hyperbolic 3-manifolds can only have quotient dimension 2 or 3, and each case is obtained infinitely many times. More specifically, we show that those with no non-trivial $A_5$ representations have quotient dimension 3, and we find a family of hyperbolic 3-manifolds obtained by Dehn surgery on an infinite 2-bridge hyperbolic knot family with quotient dimension 2.  \\

\scriptsize
\noindent \textit{Keywords:} Quotient dimension, $A_5$ representations, hyperbolic 3-manifolds.
\par
\end{abstract}


\setcounter{section}{0}

\section*{Introduction}

The classification and characterization of 3-manifolds is a key aim in low-dimensional topology. William Thurston's 1997 Geometrization Conjecture, proved by Perelman in 2003, places fundamental groups at the heart of the study of 3-manifolds, establishing the fundamental group $\pi_1$ as a primary tool for distinguishing between different closed 3-manifolds \cite{Thurston+1997}. Indeed, Perelman's proof tells us that fundamental groups determine closed, orientable 3-manifolds up to orientations of the prime factors and up to the indeterminacy arising from lens spaces \cite{perelman2002entropyformularicciflow} \cite{perelman2003ricciflowsurgerythreemanifolds} \cite{perelman2003finiteextinctiontimesolutions}. The Poincaré Conjecture, a special case of the Geometrization Conjecture, asserts that any simply connected, closed 3-manifold is homeomorphic to the three dimensional sphere $S^3$, characterizing $S^3$ as the unique, simply connected, closed 3-manifold. 

For 3-manifolds non-homeomorphic to $S^3$, it is useful to somehow quantify the non-triviality of their fundamental groups. The Geometrization Theorem implies that 3-manifolds have residually finite fundamental groups, so we can measure non-triviality by considering the representations to finite groups. However, there is no finite group $G$ such that every 3-manifold's fundamental group has a non-trivial homomorphism to $G$. We thus seek a different uniform measure of non-triviality via the following conjecture from Kirby's 1997 list of problems in low-dimensional topology \cite[3.15A]{Kirby}.

\begin{conj}[Kirby, 3.15A] If $Y$ is a closed, connected, 3-manifold other than $S^3$, then $\pi_1(Y)$ admits a non-trivial $\SU(2)-$representation.
\end{conj}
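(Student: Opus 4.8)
\textbf{Strategy and reductions.} The plan is to reduce to integer homology spheres and then treat the Thurston geometries one at a time, leaving the hyperbolic case --- the only one with no shortcut through the representation variety --- to gauge theory. If $Y$ is non-orientable, the orientation character gives a surjection $\pi_1(Y)\to\Z/2$; if $Y$ is orientable but $H_1(Y;\Z)\neq 0$, the abelianization has a non-trivial abelian quotient. Composing either with an embedding of that quotient into the maximal torus $U(1)\subset\SU(2)$ gives a non-trivial representation, so we may assume $Y$ is an integer homology sphere. A non-trivial $\SU(2)$-representation of a prime summand other than $S^3$ extends to all of $\pi_1(Y)$ by the trivial representation on the remaining summands, so by induction on the number of prime summands we may assume $Y$ is prime, hence irreducible, with $\pi_1(Y)$ infinite unless $Y$ is the Poincar\'e sphere. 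Note that $\pi_1(Y)$ is then perfect, so \emph{any} non-trivial $\SU(2)$-representation is automatically irreducible. By Geometrization we are left with four cases: $Y$ spherical, Seifert fibered over a hyperbolic base orbifold, toroidal, or hyperbolic.

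\textbf{Spherical and Seifert fibered cases.} The only spherical integer homology sphere is the Poincar\'e sphere, with $\pi_1(Y)\cong 2I$, which sits tautologically in $\SU(2)\cong\bS^3$ as the preimage of $I<\SO(3)$. For a Seifert fibered integer homology sphere $\Sigma(a_1,\dots,a_n)$ I would use the Fintushel--Stern description of its flat $\SU(2)$-connections: sending the regular fiber class to $\pm 1$ reduces the problem to representations of the quotient Fuchsian orbifold group, and an explicit rotation-number count then exhibits the irreducible character variety as non-empty (indeed finite and non-degenerate). For a graph-manifold integer homology sphere one glues such representations of the Seifert pieces across the JSJ tori, matching rotation numbers.

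\textbf{The hyperbolic wall.} This is the main obstacle, and it also governs any toroidal $Y$ having a hyperbolic JSJ piece. The complete hyperbolic holonomy $\pi_1(Y)\to\SL(2,\C)$ is irreducible but is never conjugate into $\SU(2)$, and the abundance of $\SL(2,\C)$-characters the geometry provides need not be accompanied by a single $\SU(2)$-character, so the geometric structure yields nothing directly. The classical substitute, Casson's invariant $\lambda(Y)$ --- a signed count of irreducible $\SU(2)$-characters, with $\lambda(S^3_{1/n}(K))=\tfrac{n}{2}\Delta_K''(1)$ --- produces irreducibles whenever $\lambda(Y)\neq 0$, but $\lambda$, and even its refinements using knot signatures, vanish for infinitely many hyperbolic integer homology spheres. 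So I would pass to instanton Floer homology, whose generators are perturbed flat $\SU(2)$-connections, i.e.\ conjugacy classes of irreducible representations $\pi_1(Y)\to\SU(2)$: it suffices to show that $I_*(Y)$ (equivalently Kronheimer--Mrowka's framed variant $I^{\#}(Y)$) carries something beyond the reducible, which I would attempt by presenting $Y$ by Dehn surgery on a non-trivial knot or link and running the instanton surgery exact triangle together with the sutured excision and decomposition machinery to force $\dim I^{\#}(Y)>\dim I^{\#}(S^3)$.

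\textbf{Where the difficulty concentrates.} Everything hinges on this last non-vanishing for an \emph{arbitrary} hyperbolic integer homology sphere with vanishing Casson and Fr\o{}yshov-type invariants, for which no economical surgery presentation is evident; supplying it --- a uniform non-vanishing theorem for $I_*(Y)$ when $Y\neq S^3$, or a direct construction of an irreducible $\SU(2)$-character from the hyperbolic structure --- is the heart of the conjecture.
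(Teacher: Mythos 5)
There is a genuine gap, and you name it yourself: the hyperbolic case (and, with it, any toroidal manifold whose JSJ decomposition contains a hyperbolic piece) is not proved but only described as a program. Be aware that the statement you were asked about is stated in the paper as a \emph{conjecture} --- Kirby's Problem 3.15A --- and the paper supplies no proof of it; it only records that the analogue with $\SU(2)$ replaced by $\SL_2(\C)$ is a theorem of Zentner, and that the conjecture is known for special families (Seifert fibered homology spheres, $1/n$-surgeries on non-trivial knots, branched double covers of determinant-one knots, splicings, Stein-fillable cases). So there is no ``paper's own proof'' to compare against, and no complete proof is currently known to exist.

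Your reductions themselves are sound and standard: non-trivial $H_1$ or non-orientability gives an abelian representation into $U(1)\subset\SU(2)$; a representation of one prime summand extends trivially over the free product; perfectness of $\pi_1$ of an integer homology sphere forces any non-trivial $\SU(2)$-representation to be irreducible; the Poincar\'e sphere is handled by $2I\subset\SU(2)$; and the Seifert fibered case follows Fintushel--Stern, exactly as in the partial results the paper cites. You are also right that the hyperbolic holonomy is never conjugate into the compact group $\SU(2)$ and that Casson-invariant and known instanton non-vanishing arguments do not cover all hyperbolic integer homology spheres. But the final step --- a uniform non-vanishing theorem for $I_*(Y)$ (or $I^{\#}(Y)$), or a direct construction of an irreducible $\SU(2)$-character from the hyperbolic structure --- is precisely the open content of the conjecture, so the proposal is an accurate survey of the state of the art rather than a proof. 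If anything, this paper's contribution is adjacent: it studies when $\pi_1$ of a hyperbolic homology sphere surjects onto $A_5$ and hence lifts to the finite subgroup $2I\subset\SU(2)$, which produces non-trivial $\SU(2)$-representations for particular families (e.g.\ the $1/n$-surgeries on $K(40N+27,20N+13)$ in Theorem 0.1(1)), but its Theorem 0.1(2) shows this finite-group route fails exactly when there is no $A_5$ quotient, so it does not close your gap either.
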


This conjecture highlights an important distinction between the 3-sphere, which has a trivial fundamental group, and other homology spheres, which, although they have the same homology as $S^3$, have more complex fundamental groups that can be mapped non-trivially into $\SU(2)$. Zentner showed in 2018 that the conjecture is true if $\SU(2)$ is replaced by $\SL_2 (\C)$ \cite{Zentner_2018}. In fact, the conjecture has been already established for several families of integer homology three-sphere, such as Seifert fibered homology three-spheres \cite{Fintushel1990InstantonHO} \cite{Sivek_2021}, branched double covers of non-trivial knots with determinant 1 \cite{Cornwell_2016} \cite{Zentner_2017}, 1/n-surgeries on non-trivial knots in $S^3$ \cite{kronheimer2003}, those that are filled by a Stein manifold which is not a homology ball \cite{Baldwin_2018}, or for splicings of knots in $S^3$  \cite{Zentner_2018}.

In this paper, we consider $2I$ representations, where $2I$ is the binary icosahedral group, a discrete subgroup of $\SU(2)$. The binary icosahedral group is the double cover of the alternating group $A_5$, where $A_5$ is isomorphic to the icosahedral group $I$, the rotational symmetry group of the regular icosahedron. Recall the short exact sequence of groups 
$$1 \rightarrow \Z/2 \rightarrow \SU(2) \xrightarrow {\pi} \SO(3) \rightarrow 1.$$ Just as $I$ is a discrete subgroup of $\SO(3)$, $2I$ is a discrete subgroup of $\SU(2)$. Thus, representations into $2I$ have the form
$$\pi_1 (Y) \rightarrow 2I \subset \SU(2).$$

This paper specifically focuses on giving a geometric meaning to $2I$ representations of 3-manifold fundamental groups by relating them to Larsen's notion of \textit{quotient dimension}, a measure used to link the algebraic properties of an object's fundamental group to the object's geometry. Larsen defines the quotient dimension of a finitely generated group $\Gamma$ as follows \cite[0.2]{Larsen}.

\begin{definition}[Larsen, 0.2] The \textbf{quotient dimension} $q_{\Gamma}$ is the minimal dimension of any linear algebraic group $G$ such that $G(\mathbb{C})$ contains an infinite quotient group of $\Gamma$. If no such $G$ exists, we say $q_{\Gamma} = \infty$. 
\end{definition}

In \cite{Larsen}, Larsen investigates the frequency of Hurwitz surfaces per genus, i.e. how often do Riemann surfaces with maximally-sized automorphism groups (of order $84(g-1)$) exist per genus $g$? Finding the quotient dimension of the fundamental groups of Hurwitz surfaces to be three, Larsen concludes that the genera of Hurwitz surfaces are roughly distributed like the \textit{perfect cubes}. 

In three-dimensions, quotient dimension represents something different: it will give us a sense of the distribution \textit{regular finite covers} of $X$. Focusing on the representations of hyperbolic integral homology spheres into the alternating group $A_5$, we arrive at the following main theorem.

\begin{theorem} \label{thm:main} Let $Y$ be a hyperbolic 3-manifold, and let $\Gamma = \pi_1(Y)$. Then, the quotient dimension of $\Gamma$ is either 2 or 3, and each case is obtained infinitely many times. More specifically,
\begin{enumerate}[start=1]
\item if $Y$ is obtained by $1/n$ surgery, where $n= \frac{10k-1}{7} \in \Z$, on the $K(40N+27, 20N+13)$ hyperbolic 2-bridge knot family ($N \in \mathbb{N}$), then $\Gamma$ has quotient dimension 2, and 
\item if $\Gamma$ does not map onto $A_5$, then $\Gamma$ has quotient dimension 3.
\end{enumerate}
\end{theorem}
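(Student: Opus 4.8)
The plan is to establish the uniform bound $2\le q_\Gamma\le 3$ first, and then to pin down each value; throughout I work with $Y$ a closed hyperbolic integral homology sphere, so $\Gamma=\pi_1(Y)$ is infinite and perfect. For the upper bound, $Y=\mathbb{H}^3/\Gamma$ realizes $\Gamma$ as a cocompact lattice in $\mathrm{Isom}^+(\mathbb{H}^3)=\mathrm{PSL}_2(\C)=\mathrm{PGL}_2(\C)$, a linear algebraic group of dimension $3$, and $\Gamma$ is an infinite quotient of itself, so $q_\Gamma\le 3$. For the lower bound I would show a perfect infinite group has no infinite quotient inside a linear algebraic group $G$ with $\dim G\le 1$: dimension $0$ is trivial, and if $\dim G=1$ then $G^{\circ}\in\{\mathbb{G}_a,\mathbb{G}_m\}$, whose automorphism groups ($\mathbb{G}_m$ and $\Z/2$) are abelian, so the perfect image $P$ of our quotient $Q$ in $\pi_0(G)$ acts trivially on $G^{\circ}$; then $Q_0:=Q\cap G^{\circ}(\C)$ is central in $Q$, so a perfect $Q$ that is a central extension of the finite group $P$ by $Q_0$ has $Q_0$ a quotient of the Schur multiplier $H_2(P;\Z)$, hence finite, forcing $Q$ finite. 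Hence $q_\Gamma\ge 2$.

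For part (2) I would prove the contrapositive: if $q_\Gamma=2$ then $\Gamma\twoheadrightarrow A_5$. Say $q_\Gamma=2$ is witnessed by an infinite quotient $Q$ of $\Gamma$ with $Q\le G(\C)$, $\dim G=2$; then $Q$ is perfect, $G^{\circ}$ is connected solvable of dimension $2$ (so $(G^{\circ})''=1$), $\pi_0(G)$ is finite, $Q_0:=Q\cap G^{\circ}(\C)$ is metabelian of finite index in $Q$, and $P:=Q/Q_0$ is a perfect finite group. The key step is to run through $G^{\circ}\in\{\mathbb{G}_m^2,\ \mathbb{G}_a\times\mathbb{G}_m,\ \mathbb{G}_a\rtimes\mathbb{G}_m,\ \mathbb{G}_a^2\}$. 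For $\mathbb{G}_m^2$ and $\mathbb{G}_a\times\mathbb{G}_m$, the relevant automorphism groups ($\mathrm{GL}_2(\Z)$, resp.\ $\mathbb{G}_m\times\Z/2$) have only solvable finite subgroups, so the perfect $P$ acts trivially on $G^{\circ}$, $Q_0$ is central in $Q$, and the central‑extension argument above forces $Q$ finite — a contradiction. For $\mathbb{G}_a\rtimes\mathbb{G}_m$ one uses that this group is centreless with solvable outer automorphism group, so $P$ acts by inner automorphisms and the preimage of $P$ in $G(\C)$ splits as $G^{\circ}(\C)\times P$; projecting $Q$ onto the metabelian factor $G^{\circ}(\C)$ kills it (perfect), again forcing $Q$ finite. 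Hence $G^{\circ}=\mathbb{G}_a^2$, and $\pi_0(G)$ acts through $\mathrm{Aut}(\mathbb{G}_a^2)=\GL_2(\C)$; the image of the perfect $P$ is a nontrivial perfect finite subgroup of $\GL_2(\C)$ (nontrivial, else the same contradiction), and since every finite subgroup of $\GL_2(\C)$ has image in $\mathrm{PU}(2)=\SO(3)$ cyclic, dihedral, $A_4$, $S_4$ or $A_5$, the only such is the binary icosahedral group $2I=\SL_2(\F_5)$ ($A_5$ itself admits no faithful two‑dimensional representation). Therefore $P$, hence $Q$, hence $\Gamma$, surjects onto $2I$, and composing with $2I\twoheadrightarrow A_5$ gives $\Gamma\twoheadrightarrow A_5$, proving (2).

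For part (1), Step 2 shows $q_\Gamma=2$ is equivalent to having an infinite quotient of $\Gamma$ inside $\C^2\rtimes 2I$ (with $2I$ acting by its standard $2$‑dimensional representation $V$), i.e.\ to a surjection $\bar\rho\colon\Gamma\twoheadrightarrow 2I$ together with a nonzero class in $H^1(\Gamma;V)$: a non‑coboundary cocycle $z\colon\Gamma\to\C^2$ yields a homomorphism $\gamma\mapsto(z(\gamma),\bar\rho(\gamma))$ not conjugate into $\{0\}\rtimes 2I$, hence with infinite image (a finite subgroup would fix the barycentre of an orbit). I would realize this at the level of the knot group: for $K=K(40N+27,20N+13)$, whose fraction has continued fraction $[2,\,20N+13]$ (so that these are twist knots), one uses the Riley‑type analysis of $\SL_2$ and $2I$ representations of $2$‑bridge knot groups to build a surjection $\rho_0\colon\pi_1(S^3\setminus K)\twoheadrightarrow 2I$ sending the meridian $\mu$ to an element of order $10$ (a lift of an order‑$5$ rotation) and the longitude to $\mu^{7}$, uniformly in $N$, along with the nonvanishing of the associated twisted homology (detected by the twisted Alexander polynomial of $K$ at the pertinent root of unity). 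Since $\mu$ and $\lambda$ commute, $\rho_0(\mu\lambda^n)=\mu^{1+7n}$, which is trivial exactly when $10\mid 1+7n$, i.e.\ exactly when $n=\tfrac{10k-1}{7}\in\Z$; for these $n$ the representation and the cohomology class descend through the surgery, giving an infinite quotient of $\Gamma$ in $\C^2\rtimes 2I$, so $q_\Gamma=2$ by Step 1. The \emph{main obstacle} is precisely this last point: producing the $2I$‑representation with the prescribed longitudinal value is a finite computation with the continued‑fraction presentation, but checking that the twisted homology of the \emph{closed} surgered manifold is genuinely nonzero — so that one gets an \emph{infinite} affine quotient rather than merely the finite quotient $2I$ — is the delicate part, and is what forces the specific fractions $K(40N+27,20N+13)$ and the congruence $n=\tfrac{10k-1}{7}$.

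Finally, each value occurs infinitely often. Value $2$: in the family of (1), as $N$ varies the twist knots $K(40N+27,20N+13)$ are pairwise distinct (distinct Alexander polynomials), and fixing $n\equiv 7\pmod{10}$ large, Thurston's hyperbolic Dehn surgery theorem makes all the surgeries hyperbolic homology spheres, pairwise non‑homeomorphic (their volumes converge to the distinct volumes of the $S^3\setminus K(40N+27,20N+13)$). Value $3$: by (2) it suffices to exhibit infinitely many hyperbolic integral homology spheres whose groups do not surject onto $A_5$, and since any surjection $\pi_1(S^3_{1/m}(K'))\twoheadrightarrow A_5$ factors through $\pi_1(S^3\setminus K')$, one may take an infinite family of hyperbolic knots $K'$ whose groups have no $A_5$‑quotient (for $2$‑bridge knots this is governed by arithmetic conditions on the fraction which fail for infinitely many values) and perform $1/m$ surgery for $m$ large; all but finitely many slopes give pairwise non‑homeomorphic hyperbolic homology spheres. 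Together with Steps 1–3 this yields the theorem.
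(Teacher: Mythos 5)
Your bounds $2 \le q_{\Gamma} \le 3$ and your argument for part (2) are essentially sound, and they take a genuinely different (arguably cleaner) route than the paper: where the paper invokes Larsen's reduction to pseudo-simple groups of vector type and then kills the extension $1 \to G^o \to G \to \pi_0(G) \to 1$ by showing $H^2(\pi_0(G);\C^2)=0$, you classify the connected $2$-dimensional groups directly and exploit perfectness of $\Gamma$ (finiteness of the Schur multiplier for perfect central extensions), which even yields the sharper statement that $q_{\Gamma}=2$ forces a surjection onto $2I$. (You are also right to read the theorem for integral homology spheres; that hypothesis is what both your argument and the paper's Proposition \ref{prop:2.3} actually use.)

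Part (1), however, is not proved, and this is where the real content of the theorem lies. Your strategy coincides with the paper's: lift a surjection $\bar\rho\colon\Gamma\twoheadrightarrow 2I$ to an affine representation into $\C^2\rtimes 2I$ with infinite image, using that the meridian goes to an element of order $10$ and the longitude to its $7$th power, so the surgery relator dies exactly when $n=\tfrac{10k-1}{7}\in\Z$. But you leave unestablished precisely the two substantive points: (i) that for \emph{every} $N$ the group of $K(40N+27,20N+13)$ admits such a surjection onto $2I$ with those peripheral images, and (ii) that the affine lift has infinite image (your twisted-cohomology nonvanishing), which you yourself flag as ``the delicate part.'' The paper's proof consists almost entirely of these verifications: computing the exponent pattern $e_i$ to get $\sigma=2$ and $w=M^{10N}M^{6}xy$ with $M=xyx^{-1}y^{-1}$; using $f(M)^{10}=1$ and $f(w_*w)=f(x)$ (checked in GAP) to see that the images of both relators are independent of $N$; using the order $10$ of $f(x)$ to derive the congruence on $n$; and finally an explicit quaternion computation showing that $F(x^{5}yx^{-5}y^{-1})$ has infinite order for any nonzero $\vec u$ — which simultaneously certifies that the cocycle is not a coboundary, so no twisted Alexander polynomial computation is needed. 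Until you supply (i) and (ii) — either by such direct computations or by actually evaluating the twisted homology for this family — the claim that these surgeries have quotient dimension $2$, and hence that the value $2$ is attained at all (let alone infinitely often), remains a gap; the rest of your outline (hyperbolicity and pairwise distinctness of the fillings, and the infinitude of quotient-dimension-$3$ examples via knot groups with no $A_5$ quotient, as the paper does with $4_1$) is routine by comparison.
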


\noindent Theorem 0.1 allows us to conclude that three-dimensional hyperbolic manifolds with regular finite coverings are distributed per genus as often as the \textit{perfect squares} and the \textit{perfect cubes}. Furthermore, there do indeed exist infinite families of homology spheres for each case.

\subsection{Outline} We begin by giving a short background on quotient dimension, specifically in the context of 2-manifolds, and we offer an interpretation of quotient dimension for hyperbolic 3-manifolds (\ref{sec:1}). We then focus on the hyperbolic integral homology sphere, a specific hyperbolic 3-manifold, and explore the following three possibilities for the quotient dimension of such a homology sphere: Can $q_{\Gamma} = 1$? (\ref{sec:2}) Can $q_{\Gamma} = 2$? (\ref{section:3}) And lastly, can $q_{\Gamma} = 3$ (\ref{section:4})? Propositions \ref{prop:1.2} and \ref{prop:2.3} prove the first part of Theorem \ref{thm:main} while we prove the second part, including points \textit{(1)} and \textit{(2)}, in the last two sections.


\section{Quotient dimension.} \label{sec:1}
Let $\Gamma$ be a finitely generated group. Quotient dimension gives us a way to \enquote{approximate} in some sense finitely generated groups like $\Gamma$ by linear algebraic groups. Larsen proves that the quotient dimension of $\Gamma$ singularly characterizes the convergence of $\Gamma$'s zeta function. Let us recall a few facts about zeta functions. For a finitely generated group $G$, we denote the index of a subgroup $H \unlhd G$ by $[G: H]$ and let
$$a_G^{\lhd} = | \{ H \unlhd G : [G: H] = n \}|.$$
The normal subgroup zeta function for $G$ is typically defined as 
$$\zeta_G (s) = \sum_{n = 1}^\infty a_G^{\lhd} (n) n^{-s} = \sum_{H \unlhd G} [G: H]^{-s},$$
\noindent where $H \unlhd G$ sums over all normal subgroups $H$ of finite index in $G$.

Similarly, Larsen defines his own zeta function, $Z_\Gamma(s)$. Let $H_\Gamma$ denote the set of isomorphism classes of finite homomorphic images of $\Gamma$ and let 
$$O(H_\Gamma) = \{ n \in \Z \ \  |  \ \ \exists H \unlhd \Gamma \text{ such that } [\Gamma: H] = n \},$$
i.e. the set of integers $n$ such that $\Gamma$ has at least one normal subgroup of index $n$. The set $O(H_\Gamma)$ is also the set of cardinalities of elements of $H_\Gamma$, i.e. of isomorphism classes of finite homomorphic images of $\Gamma$. He finally defines 
$$Z_\Gamma(s) = \sum_{n \in O(H_\Gamma)}n^{-s}.$$

We want a formula for the abscissa of convergence of $Z_{\Gamma}(s)$. Note that if the profinite completion of $\Gamma$ is finite, then the number of normal subgroups of finite index is finite, so $Z_{\Gamma}(s)$ becomes a finite sum rather than an infinite series. A finite sum of terms of the form $n^{-s}$ is an entire function in $s$, so the function converges everywhere in $\C$, and thus the abscissa of convergence is $\infty$. This case is trivial, so he only considers the case in which the profinite completion of $\Gamma$ is infinite. For the finitely generated group $\Gamma$ with infinite profinite completion, Larsen arrives at a formula for the abscissa of convergence of $Z_{\Gamma}(s)$ that relies on quotient dimension, shown below \cite[0.3]{Larsen}.
\begin{theorem}[Larsen, 0.3] Let $\Gamma$ be a finitely generated group with infinite profinite completion. The abscissa of convergence of $Z_{\Gamma}(s)$ is $1/q_{\Gamma}$.
\end{theorem}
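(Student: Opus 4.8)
\medskip
\noindent\textbf{Proof strategy.}
The plan is to convert the statement into a counting estimate and then prove two matching bounds. Since $Z_\Gamma(s)=\sum_{n\in O(H_\Gamma)}n^{-s}$ is a Dirichlet series with coefficients in $\{0,1\}$, and $\Gamma$ has infinite profinite completion (so infinitely many coefficients are nonzero), its abscissa of convergence equals
$$\beta:=\limsup_{x\to\infty}\frac{\log N(x)}{\log x},\qquad N(x):=\bigl|\,O(H_\Gamma)\cap[1,x]\,\bigr|.$$
Thus it suffices to prove $\beta\ge 1/q_\Gamma$ and $\beta\le 1/q_\Gamma$ (with $1/\infty:=0$; the case $q_\Gamma=\infty$ falls out of the upper bound, which then gives $N(x)=x^{o(1)}$).

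\medskip
\noindent\textit{Lower bound.}
Put $q=q_\Gamma$ and fix a linear algebraic group $G/\mathbb C$ of dimension $q$ with a surjection $\Gamma\twoheadrightarrow\Delta\le G(\mathbb C)$ onto an infinite subgroup; replacing $G$ by the Zariski closure of $\Delta$ (still of dimension $q$ by minimality) we may assume $\Delta$ is Zariski-dense. One checks readily from the definition of $q_\Gamma$ that for $q\ge2$ the identity component $G^\circ$ can be taken semisimple or a vector group: a nontrivial torus, or a proper positive-dimensional semisimple quotient, of $G^\circ$ would make $\Delta$ surject onto an infinite subgroup of an algebraic group of dimension $<q$, contradicting minimality. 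A standard specialization argument now realizes $\Gamma\twoheadrightarrow\Delta'\le\mathbf G(\mathcal O_{K,S})$ with Zariski-dense image, for a number field $K$, finite set of places $S$, and smooth affine $\mathcal O_{K,S}$-model $\mathbf G$ of $G$. For almost every prime $\mathfrak p$ of $K$ the reduction $\Gamma\to\mathbf G(\mathcal O_K/\mathfrak p)$ has image of order $\asymp|\mathbf G(\mathbb F_\mathfrak p)|\asymp|\mathbb F_\mathfrak p|^{q}$ --- by strong approximation (Matthews--Vaserstein--Weisfeiler, Nori) in the semisimple case, and because a Zariski-dense finitely generated subgroup of $\mathbb G_a^q$ reduces onto $(\mathcal O_K/\mathfrak p)^q$ away from finitely many $\mathfrak p$ in the other case. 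Taking $\mathfrak p$ of residue degree one above the positive density of rational primes $p$ that split appropriately in $K$, and noting that only boundedly many such $\mathfrak p$ share a value of $|\mathbf G(\mathbb F_\mathfrak p)|$, produces $\gg x^{1/q}/\log x$ distinct elements of $O(H_\Gamma)$ below $x$, so $\beta\ge 1/q$.

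\medskip
\noindent\textit{Upper bound.}
We must show $N(x)\ll_\varepsilon x^{1/q+\varepsilon}$. Every finite quotient $Q$ of $\Gamma$ has a chief series whose factors are elementary abelian $p$-groups or direct powers $S^{k}$ of a nonabelian finite simple group, and $|Q|$ is the product of their orders; so it is enough to bound the density of the multiplicative semigroup of orders produced by each type of chief-factor family. By the classification of finite simple groups, alternating, sporadic, and fixed-$S$ families give orders lying in sets of density $x^{o(1)}$ (their orders are super-polynomially spaced, or powers of a fixed integer); a family of Lie type of bounded rank $r$ over $\mathbb F_{q'}$ with $q'\to\infty$ gives orders $\asymp_r(q')^{\dim\mathbf H}$, hence of density $x^{1/\dim\mathbf H+o(1)}$; and the abelian chief factors are governed by the abelianizations of finite-index subgroups of $\Gamma$. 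The decisive step is an \emph{extraction principle}: if $\Gamma$ surjects onto $\mathbf H(\mathbb F_{q'})$ for infinitely many $q'$, an ultraproduct over those primes produces a homomorphism $\Gamma\to\mathbf H(\mathbb F)$ with infinite image, $\mathbb F$ a pseudofinite field of characteristic $0$, hence (Lefschetz principle) an infinite quotient of $\Gamma$ inside $\mathbf H(\mathbb C)$, so $q_\Gamma\le\dim\mathbf H$; similarly a "wide" abelian family forces, by an ultraproduct argument, an infinite abelian-by-finite quotient of $\Gamma$ inside an $m$-dimensional algebraic group, so $q_\Gamma\le m$. Hence every infinite family of finite quotients of $\Gamma$ contributes orders of density $\le x^{1/q+o(1)}$, and summing the (essentially Dirichlet-convergent) contributions over all families gives $N(x)\ll_\varepsilon x^{1/q+\varepsilon}$; if $q_\Gamma=\infty$ no Lie-type or wide abelian family survives, so $N(x)=x^{o(1)}$.

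\medskip
\noindent I expect the extraction principle, and the uniformity needed to sum the family contributions, to be the main obstacle: one must turn an over-abundance of finite quotients into a \emph{single} honest infinite linear quotient of controlled dimension, uniformly over the a priori unboundedly many simple types and solvable configurations. This draws on the full classification of finite simple groups (for the trichotomy and for $|\mathbf H(\mathbb F_{q'})|\asymp_r(q')^{\dim\mathbf H}$ with rank-dependent implied constants), on the finite generation of $\Gamma$ (to bound the multiplicities of isomorphic simple composition factors and the ranks of abelian sections that arise), on strong approximation for the lower bound, and on an ultraproduct-plus-Lefschetz descent for the upper bound; arranging the bookkeeping so that the contributions provably total $x^{1/q+o(1)}$ is the delicate heart of the argument.
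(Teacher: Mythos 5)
First, a point of comparison: the paper itself contains no proof of this statement --- it is quoted as Theorem 0.3 of Larsen's paper \cite{Larsen} and simply used. So your proposal can only be judged on its own merits and against Larsen's argument, whose general architecture (lower bound by specializing a Zariski-dense infinite linear quotient into an arithmetic situation and harvesting congruence quotients; upper bound by using the classification of finite simple groups and the pseudo-simple trichotomy to convert an over-supply of finite quotients into a single infinite quotient of controlled dimension) your sketch does broadly resemble. The reduction of the abscissa to the counting function $N(x)$, and the lower bound via strong approximation in the semisimple case and explicit reduction in the vector case, are essentially sound, although the ``standard specialization argument'' and the passage from the subgroup $\Delta\cap G^{\circ}(\mathbb{C})$ back to genuine quotients of $\Gamma$ both need more care than you give them.

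The upper bound, however, is where the theorem actually lives, and there your write-up has genuine gaps rather than routine omissions. (i) The extraction principle is asserted, not proved, and as stated it fails for Lie-type families over fields of a fixed characteristic $p$ with $q'=p^{k}\to\infty$: the ultraproduct field is pseudofinite of characteristic $p$, so there is no Lefschetz embedding into $\mathbb{C}$ and no direct comparison with $q_{\Gamma}$, which is defined over $\mathbb{C}$. (This is fixable --- such bounded-characteristic families contribute only $O(\log x)$ orders, so they can be discarded --- but you would need to say so and split the argument accordingly.) (ii) More seriously, you apply the principle to chief factors of a finite quotient $Q$, but a chief factor $S^{k}$ or elementary abelian $V$ is a minimal normal subgroup of some quotient of $\Gamma$, not a quotient of $\Gamma$; your hypothesis ``$\Gamma$ surjects onto $\mathbf{H}(\mathbb{F}_{q'})$'' is simply not available, so even a correct extraction principle does not bound the orders these factors contribute. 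Bridging from sections back to honest quotients (via the action of $Q$ on its minimal normal subgroups, or by running the ultraproduct on the $Q$'s themselves) is a missing step, not a bookkeeping detail. (iii) Finally, the summation over unboundedly many simple types and abelian configurations requires the $x^{1/q+o(1)}$ bounds to be uniform (summable at every exponent above $1/q$), and this is only gestured at. You candidly flag (iii) and part of (ii) as ``the delicate heart,'' and that is accurate: what you have is a plausible strategy outline whose decisive steps are absent or, as formulated, incorrect, so it does not yet constitute a proof of Larsen's theorem.
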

\noindent Thus, quotient dimension not only gives us a measure of the approximate \enquote{size} of the finitely generated group $\Gamma$, but it also helps us understand how the zeta function of $\Gamma$ converges. This second application proves essential in the study of Hurwitz surfaces and their possible genus. 

\subsection{Hurwitz Surfaces} Let $\Gamma = \Delta(2, 3, 7)$, the triangle group, given by 
$$ \Delta(2, 3, 7) = \langle \ x, y, z \ | \ x^2 = y^3 = z^7 = 1 \ \rangle.$$
Let $G$ be a normal subgroup of finite index in $\Delta = \Delta(2, 3, 7)$, and let $\mathfrak{H}$ denote the upper half plane. The \textit{Hurwitz surfaces}, those with no more than $84(g-1)$ automorphisms, are exactly the complex curves $X$ given by 
$$X \simeq H \setminus \mathfrak{H},$$ 
for some finite index normal subgroup $H$ of the (2, 3, 7) triangle group $\Delta(2, 3, 7).$ Since $\Delta(2, 3, 7)$ is finitely generated, there are only finitely many such normal subgroups of a given index. Since $\text{Aut}(X) \simeq \Gamma / H$, we have $|\text{Aut}(X)| = [\Gamma : H]$, so in the case of Hurwitz surfaces, we have
$$n = [\Gamma: H] = 84(g(X)-1).$$

Since index is related to genus, there are only finitely many Hurwitz curves up to isomorphism in each genus. The distribution of such curves is not well known, and those found until now have been discovered experimentally. The current known sequence of genus for which a Hurwitz surface exists is 
\small
$$3, 7, 14, 17, 118, 129, 146, 385, 411, 474, 687, 769, 1009, 1025, 1459, 1537, 2091,...$$

\normalsize
\noindent The immediate question that arises is: how are the genera of these Hurwitz surfaces distributed? We want to use our formula for the zeta function of $\Gamma = \Delta(2, 3, 7)$. Recall we let $H_\Gamma$ denote the set of isomorphism classes of finite homomorphic images of $\Gamma$ and $O(H_\Gamma)$ the set of integers $n$ such that $\Gamma$ has at least one normal subgroup of index $n$. This set $O(H_\Gamma)$ is reduced to the set of natural numbers $84(g-1)$ for genus $g \in N$. Let $H_g$ be the finite set of isomorphism classes of Hurwitz curves in genus $g$. Then, we rewrite 
$$Z_{\Gamma}(s) = \sum_{n \in O(H_{\Gamma})} n^{-s} = \sum_{X \in H_g} \frac{1}{84(g(X)-1)^s}$$
where $\sum_{X \in H_g} \frac{1}{84(g(X)-1)^s}$ has the same convergence properties as $\sum_{X \in H_g} \frac{1}{g(X)^s}.$ Thus, the genus $g$ of Hurwitz surfaces up to isomorphism satisfy
$$Z_{\Gamma} (s) = \sum_{g \in G} g^{-s},$$
where $G$ denotes the set of integers $g \geq 2$ such that there exists at least one compact Riemann surface of genus $g$ with automorphism group of order $84(g-1)$, i.e. at least one Hurwitz surface. From Theorem 1.1, the abscissa of convergence of $Z_{\Gamma} (s)$ above is $1/q_{\Gamma}$, and from \cite[2.4]{Larsen}, $q_{\Gamma} = 3$ for $\Gamma = \Delta(2, 3, 7)$, so the series $\sum_{g \in H} g^{-s}$ converges absolutely for $\mathcal{R}(s) > 1/3$. Thus, the genus of Hurwitz surfaces are distributed roughly like the cubes. 

Moreover, Larsen proves that the triangle group $\Gamma = \Delta (a, b, c)$ for $a, b, c$ positive integers has quotient dimension $\leq 3$, and more specifically, he gives three examples for each possible value of quotient dimension:
\begin{enumerate}
\item $q_{\Gamma} = 3$, where $\Gamma = \Delta (2, 3, 7),$ 
\item $q_{\Gamma} = 2$, where $\Gamma = \Delta (2, 3, 8),$
\item $q_{\Gamma} = 1$, where $\Gamma = \Delta (2, 3, 12)$.
\end{enumerate}
What happens, however, when we leave two-dimensional space and move into three-dimensional space? More specifically, we ask two questions: 
\begin{enumerate}
\item what does quotient dimension mean in 3 dimensions? and 
\item what possible values of quotient dimension exist in 3 dimensions? 
\end{enumerate}

We start with the first. Let $Y$ be a hyperbolic 3-manifold, and $\Gamma = \pi_1(Y)$. In our original manipulation of the zeta-function, we used the fact the indices of the normal subgroups of $\Gamma = \Delta (2, 3, 7)$ were inherently related to the genus of the Hurwitz surfaces. In three-dimensional space, the normal subgroups of a space's fundamental group correspond to its set of regular covers, we find that the quotient dimension in three-dimensional space tells us about the distribution of \textit{regular covers} of $Y$. There is a one to one correspondence between the covering spaces of a Riemannian manifold $M$ and the subgroups of the group action $\Gamma < \text{Isom}(M)$ \cite[1.38]{hatcher2002algebraic}. A covering of type $M \rightarrow M/\Gamma$ is called \textit{regular}, and the corresponding fundamental group $\pi_1 (M)$ is normal in $\pi_1 (M/\Gamma) = \Gamma$, \cite[1.39]{hatcher2002algebraic} so we also have:
$$\{ \text{\textbf{regular} coverings of }M    \}  \leftrightarrow   \{ \text{\textbf{normal} subgroups of } \Gamma \}.$$

A covering space is regular if for every $x \in X$ and pair of lifts $\tilde{x}, \tilde{x}'$, there is a deck transformation (isomorphism $\tilde{X} \rightarrow \tilde{X}$) taking $\tilde{x}$ to $\tilde{x}'$ (see  \cite[Pg. 70]{hatcher2002algebraic} for more details). Recall that for a finitely generated group $\Gamma$, the quotient dimension $q_{\Gamma}$ tells us how quickly the sum $Z_\Gamma(s) = \sum_{n \in O(H_\Gamma)}n^{-s}$, where $O(H_\Gamma)$ is the set of integers $n$ such that $\Gamma$ has at least one normal subgroup of index $n$, converges. Since $\Gamma$ is also the fundamental group of $M$, the indices of the normal subgroups of  $\Gamma$ are distributed similar to the powers of $q_{\Gamma}$, and thus so are the locally symmetric coverings of $M$.

We turn now to the second question. In 3-dimensions, the notion of Hurwitz surfaces no longer exists; however, similar to triangle groups, the isometries of hyperbolic space are contained in $PSL_2(\mathbb{C})$, so we do know that the quotient dimension of hyperbolic 3-manifolds $q_{\Gamma} \leq 3$. We thus limit our discussion of three-dimensional quotient dimension to three-dimensional hyperbolic space, noting the following formalized proposition.

\begin{proposition} \label{prop:1.2} Let $Y$ be hyperbolic 3-manifold, and $\Gamma = \pi_1(Y)$. Then, 
$$q_{\Gamma} \leq 3.$$
\end{proposition}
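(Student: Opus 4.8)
The plan is to exhibit a single $3$-dimensional linear algebraic group whose complex points already contain $\Gamma$ itself as an infinite quotient; the group $\mathrm{PGL}_2$ does the job. First recall that $\mathrm{PGL}_2$ is an affine --- hence linear --- algebraic group of dimension $3$ (for instance, its adjoint action on $\mathfrak{sl}_2$ embeds it as a closed subgroup of $\mathrm{GL}_3$), and that over the algebraically closed field $\C$ one has $\mathrm{PGL}_2(\C) = \mathrm{PSL}_2(\C)$, which is precisely the group $\mathrm{Isom}^+(\bH^3)$ of orientation-preserving isometries of hyperbolic $3$-space (here $Y$, being a hyperbolic homology sphere, or more generally an orientable hyperbolic $3$-manifold, has holonomy landing in this subgroup).

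The first step of the proof is then just the defining property of hyperbolicity: writing $Y = \bH^3/\Gamma$ exhibits $\Gamma$ as a discrete, torsion-free subgroup of $\mathrm{Isom}^+(\bH^3) = \mathrm{PSL}_2(\C)$, so $\Gamma$ is isomorphic to a subgroup of $G(\C)$ for $G = \mathrm{PGL}_2$. The second step is to note that $\Gamma$ is infinite: it is non-trivial and torsion-free, and every non-trivial torsion-free group is infinite (equivalently, every non-trivial element of a torsion-free discrete subgroup of $\mathrm{PSL}_2(\C)$ is parabolic or loxodromic, hence of infinite order). The third and final step is to apply Larsen's definition of quotient dimension with the trivial normal subgroup: $\Gamma/\{1\} \cong \Gamma$ is an infinite quotient of $\Gamma$ sitting inside $G(\C) = \mathrm{PGL}_2(\C)$, so $q_\Gamma \leq \dim G = 3$.

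The only genuinely delicate point --- and hence the "main obstacle", such as it is --- is that the witnessing group in the definition must be \emph{linear} algebraic, which is exactly why one should pass to $\mathrm{PGL}_2$ (or, after invoking Culler's theorem that the holonomy of an orientable hyperbolic $3$-manifold lifts to $\mathrm{SL}_2(\C)$, to $\mathrm{SL}_2$) rather than arguing with $\mathrm{Isom}^+(\bH^3)$ merely as an abstract Lie group. Once that identification is in place, the statement follows immediately, with no computation required.
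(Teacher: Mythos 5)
Your proof is correct and takes essentially the same route as the paper: the paper's one-line argument is exactly that $\Gamma$ sits inside $\mathrm{PSL}_2(\C)$, the complex points of the $3$-dimensional linear algebraic group $\mathrm{PGL}_2$, so $q_{\Gamma} \leq 3$ by Larsen's definition. You merely make explicit the details the paper leaves implicit (linearity and dimension of $\mathrm{PGL}_2$, infinitude of $\Gamma$, and using $\Gamma$ itself as the infinite quotient).
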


\begin{proof}
Similar to the case of triangle groups, the isometries of hyperbolic space are contained in $\text{PSL}_2(\C)$, so the quotient dimension of hyperbolic 3-manifolds can be no more than 3. 
\end{proof}


\section{Quotient dimension cannot be 1.} \label{sec:2}
In the case of quotient dimension one, let us first define a few properties about linear algebraic groups. Let $G$ be a linear algebraic group over $\mathbb{C}$ of minimal dimension among those whose complex points contain an infinite quotient group of $\Gamma$. If $N$ is a normal subgroup of $G$ of positive dimension, then the image $\Gamma'$ of $\Gamma$ in the quotient $(G/N)(\mathbb{C})$ must be finite. The preimage of $\Gamma'$ in $G$ contains $\Gamma$ and must therefore have the same dimension as $G$, so $\text{dim} \, \, N = \text{dim} \, \, G$. Consider the definition of \textit{pseudo-simple} from \cite[1.1]{Larsen}. 

\begin{definition}[Larsen, 1.1] A linear algebraic group $G$ is pseudo-simple when every normal linear subgroup $N$ of $G$ is finite or of finite index in $G$.
\end{definition}

\subsection{Some Normal Linear Subgroups of $G$}
Every linear algebraic group $G$ has a unique irreducible component $G^o$ containing the identity element $e$, and $G^o$ is a closed normal subgroup of finite index in $G$ \cite[2.1]{Macdonald_Carter_MacDonald_Segal_1995}. The radical $R (G)$ of $G$ is the unique maximal, closed, connected, solvable, normal subgroup of $G$. The unipotent radical $R_u (G)$ is the unique maximal, closed, connected, unipotent, normal subgroup of $G$. Note that $R_u (G) \subset R(G)$. The derived unipotent radical $R_u^{der} (G)$ is the derived, or commutator, subgroup of $R_u (G)$. We thus have the following filtration of $G$:
$$\{0\} \subset R_u^{der} (G) \subset R_u (G) \subset R (G) \subset G^o \subset G.$$

\noindent  If the radical $R(G) = \{ 0 \},$ the group $G$ is said to be \textit{semisimple}. If the unipotent radical $R_u (G) = \{ 0 \},$ $G$ is said to be \textit{reductive} \cite[Interlude]{Macdonald_Carter_MacDonald_Segal_1995}. A solvable reductive group is said to be \textit{multiplicative}. If $R_u^{der} (G) = \{ 0 \},$ then $R_u (G)$ is commutative. Consider now a lemma from Larsen \cite[1.2]{Larsen}.

\begin{lemma}[Larsen, 1.2] If $G$ is pseudo-simple, the identity component $G^{0}$ is either semi-simple, multiplicative, or commutative and unipotent. 
\end{lemma}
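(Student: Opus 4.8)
The plan is to play pseudo-simplicity against the canonical filtration
$\{0\} \subseteq R_u^{der}(G) \subseteq R_u(G) \subseteq R(G) \subseteq G^o \subseteq G$
already recorded in the excerpt. The first step is to observe that each of $R(G)$, $R_u(G)$, $R_u^{der}(G)$ is a \emph{characteristic} subgroup of $G^o$: the radical and the unipotent radical are defined by maximality conditions preserved by every automorphism of $G^o$, and $R_u^{der}(G)$ is the commutator subgroup of the characteristic subgroup $R_u(G)$. Since characteristic subgroups of the normal subgroup $G^o$ are normal in $G$, all three are normal linear subgroups of $G$. They are also connected and closed --- the radicals by definition, and the commutator subgroup of the connected group $R_u(G)$ is again connected and closed. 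Pseudo-simplicity then forces each of the three to be either finite or of finite index in $G$; being connected, ``finite'' means ``equal to $\{0\}$'', and a connected closed subgroup of $G^o$ having finite index in $G$ (hence in $G^o$) must be all of $G^o$, since a connected algebraic group has no proper closed subgroup of finite index.

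With this dichotomy in hand I would run a case analysis from the top of the filtration downward. If $R(G) = \{0\}$ then $G$, and hence $G^o$, is semisimple. Otherwise $R(G) = G^o$, so $G^o$ is connected solvable; now inspect $R_u(G)$. If $R_u(G) = \{0\}$ then $G$ is reductive, so $G^o$ is solvable and reductive, i.e.\ multiplicative. Otherwise $R_u(G) = G^o$, so $G^o$ is unipotent; finally inspect $R_u^{der}(G) = [R_u(G), R_u(G)] = [G^o, G^o]$. If $R_u^{der}(G) = \{0\}$ then $R_u(G) = G^o$ is abelian, so $G^o$ is commutative and unipotent. These three outcomes are exactly the three alternatives claimed.

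The one configuration left over, $R_u^{der}(G) = G^o$, is the only real obstacle, and it is eliminated by a standard structural fact: a unipotent algebraic group is nilpotent, hence solvable, so its derived series strictly decreases; therefore $[G^o, G^o] = G^o$ forces $G^o = \{0\}$, and the trivial group satisfies all three alternatives vacuously. (Equivalently, one may simply note that $G^o = \{0\}$ means $G$ is finite, so the statement is empty in that case.) Collecting the cases proves the lemma. Beyond the definitions recalled in the excerpt, the only inputs are that $G^o$ and the radicals are connected, that a connected algebraic group has no proper finite-index closed subgroup, and that unipotent groups are nilpotent --- all standard.
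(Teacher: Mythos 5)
Your proposal is correct and follows essentially the same route as the paper: the radicals in the canonical filtration are characteristic in $G^o$, hence normal in $G$, so pseudo-simplicity together with connectedness collapses the filtration to a single nontrivial step, and the three possible locations of that step give the three alternatives. The only addition is that you explicitly dispose of the residual case $R_u^{der}=G^o$ via nilpotence of unipotent groups, a point the paper's sketch leaves implicit; otherwise the arguments coincide.
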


\noindent This lemma follows from the fact that as characteristic subgroups of $G^o$, the radical $R(G^o)$, unipotent radical $R_u(G^o),$ and derived unipotent radical $R_u(G^o)^{der}$ are normal subgroups. By Definition 2.1, the filtration
$${0} \subset R_u(G^o)^{der} \subset R_u(G^o) \subset R(G^o) \subset G^o$$
has one non-trivial step, i.e. $\{0\} \subset G^o$. The three cases, (1) semi-simple, (2) multiplicative, and (3) commutative and unipotent correspond to the quotients arising from the three possible non-trivial steps:
\begin{enumerate}
\item $R_u(G^o)^{der} = \{ 0 \} \hfill \{0\} \subset R_u(G^o) = R(G^o) = G^o$
\item $R_u(G^o)^{der} = R_u(G^o) = \{ 0 \} \hfill \{0\} \subset R(G^o) = G^o$
\item $R_u(G^o)^{der} = R_u(G^o) = R(G^o) = \{ 0 \} \hfill \{0\} \subset G^o$
\end{enumerate}

\noindent The three possibilities for the identity component $G^{0}$ in Lemma 2.1 correspond to three types for $G$. The group $G$ is said to be of \textbf{non-abelian}, \textbf{toral}, or \textbf{vector} if the identity component $G^{0}$ is semi-simple, multiplicative, or commutative and unipotent, respectively. For pseudo-simple groups of toral type, Larsen defines the following proposition \cite[1.3]{Larsen}.

\begin{proposition}[Larsen, 1.3] If $G$ is a pseudo-simple linear algebraic group of toral type such that $G(\C)$ contains an infinite homomorphic image of $\Gamma$, then there exists a pseudo-simple group $G'$ of vector type with $\dim G' \leq \dim G$ such that $G' (\C)$ also contains an infinite quotient of $\Gamma$. 
\end{proposition}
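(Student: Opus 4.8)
The plan is to descend from the identity component $T := G^{0}$, which by the toral-type hypothesis is a torus of dimension $d := \dim G$ (and has finite index in $G$), to a vector group of dimension at most $d$ by a linear-algebra argument on an auxiliary rational representation, and then to pass to a pseudo-simple group at no cost in dimension. First I would fix notation: let $Q$ be the infinite quotient of $\Gamma$ that embeds in $G(\C)$, let $Q^{0} := Q \cap T(\C)$ (of finite index in $Q$, hence infinite, hence of positive free rank), and let $N$ be the normal core of $Q^{0}$ in $Q$. Then $N \lhd Q$ is finitely generated abelian, $W := Q/N$ is finite, and because $T$ is normal in $G$ with $T(\C)$ abelian, conjugation gives a $W$-action on $N$ for which the inclusion $N \hookrightarrow T(\C) = (\C^{\times})^{d}$ is $W$-equivariant, $W$ acting on $T$ through the natural homomorphism $W \to \mathrm{Aut}(T) = \GL_{d}(\Z)$, i.e.\ by monomial substitutions with integer exponents. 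The objective becomes: produce a $W$-stable quotient $N \twoheadrightarrow \overline{N}$ that is infinite of free rank $\le d$.

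The key step is a dimension count after tensoring with $\Q$. Writing $\C^{\times}$ additively, the group $\C^{\times}\otimes_{\Z}\Q$ is a $\Q$-vector space $U$ (torsion is annihilated), and the $W$-equivariant injection above becomes a $\Q[W]$-module injection $N\otimes_{\Z}\Q \hookrightarrow U\otimes_{\Q}\Q^{d}$, where $W$ acts trivially on $U$ and through the integral representation $\rho\colon W \to \GL_{d}(\Z)$ on $\Q^{d}$ — this is exactly the place where it matters that automorphisms of a torus are monomial with \emph{integer} exponents. Hence $U\otimes_{\Q}\Q^{d}$ is a direct sum of copies of the single $d$-dimensional module $(\Q^{d},\rho)$, so every irreducible $\Q[W]$-constituent of it, and therefore every irreducible $\Q[W]$-submodule of $N\otimes_{\Z}\Q$, has dimension at most $d$. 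Since $\Q[W]$ is semisimple and $N\otimes_{\Z}\Q \ne 0$, I can choose an irreducible summand $S$ with $k := \dim_{\Q} S \le d$ and let $\overline{N}$ be the image of $N$ under the projection $N\otimes_{\Z}\Q \to S$: it is $W$-stable, torsion-free, finitely generated, and spans $S$, so $\overline{N} \cong \Z^{k}$ with $1 \le k \le d$. Its kernel $H := \ker(N \to \overline{N})$ is $W$-stable inside the abelian normal subgroup $N$, hence normal in $Q$, so $\overline{Q} := Q/H$ is again an infinite quotient of $\Gamma$, sitting in an extension $1 \to \Z^{k} \to \overline{Q} \to W \to 1$ with $W$ acting on $\Z^{k}$ through $W \to \GL_{k}(\Z)$.

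Next I would realize $\overline{Q}$ in a vector-type group: put $G'_{0} := \G_{a}^{k} \rtimes \underline{W}$ over $\C$, with the constant finite group $\underline{W}$ acting linearly via $W \to \GL_{k}(\Z) \hookrightarrow \GL_{k}(\C)$. This is a linear algebraic group of vector type with $\dim G'_{0} = k \le d = \dim G$. Since $W$ is finite and $\C^{k}$ is uniquely divisible, $H^{2}(W;\C^{k}) = 0$; pushing the extension $1 \to \Z^{k} \to \overline{Q} \to W \to 1$ out along $\Z^{k} \hookrightarrow \C^{k}$ therefore yields a split extension together with an injection $\overline{Q} \hookrightarrow \C^{k} \rtimes W = G'_{0}(\C)$, so $G'_{0}(\C)$ contains an infinite quotient of $\Gamma$. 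To make the group pseudo-simple I would first replace $G'_{0}$ by the Zariski closure of $\overline{Q}$ inside it — still of vector type, as closed connected subgroups of $\G_{a}^{k}$ are linear subspaces, and still of dimension $\le d$ — and then induct on dimension: if the group is not pseudo-simple it has a normal algebraic subgroup $N'$ with $0 < \dim N' < \dim$, the quotient by $N'$ is again of vector type and of strictly smaller dimension, and the image of $\overline{Q}$ in it is Zariski dense, hence infinite and still a quotient of $\Gamma$, so the induction proceeds and terminates at a pseudo-simple vector-type group of dimension $\le d$ whose complex points contain an infinite quotient of $\Gamma$.

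I expect the one genuinely delicate point to be the bound $k \le d$. A priori the free rank of $N$ can be much larger than $d$ — already $\langle 2,3\rangle \cong \Z^{2} \subset \C^{\times}$ exhibits this for $d=1$ — so there is no naive embedding of $N$ into a vector group of dimension $\le d$, and the whole force of the argument is that passing to $N\otimes_{\Z}\Q$ converts the integral monomial $W$-action into the fixed $d$-dimensional representation $\rho$, which caps the dimensions of all constituents and hence supplies a $W$-equivariant quotient of the required small rank. By comparison the pseudo-simplicity step is routine, the only subtlety there being to pass to the Zariski closure first so that the image of $\overline{Q}$ cannot collapse to a finite group under the successive quotients.
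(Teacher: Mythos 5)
The paper does not actually prove this proposition --- it is imported verbatim from Larsen \cite[1.3]{Larsen} and used as a black box --- so there is no in-paper argument to compare against; judged on its own, your proof is correct and is essentially the standard descent from toral to vector type. The pivotal points all check out: $Q\cap T(\C)$ is already normal in $Q$ (so the ``normal core'' step is redundant, not wrong), it is finitely generated of positive rank, and the conjugation action factors through the finite group $W$ acting by algebraic automorphisms of $T$, i.e.\ through $\GL_d(\Z)$; flatness of $\Q$ makes $N\otimes\Q\hookrightarrow (\C^\times\otimes\Q)\otimes_\Q\Q^d$ injective, and since the latter is a direct sum of copies of the $d$-dimensional module $(\Q^d,\rho)$, Maschke gives an irreducible summand $S$ of dimension $k\le d$, whose induced quotient $\overline N\cong\Z^k$ is $W$-stable with $W$-stable kernel, hence yields the infinite quotient $\overline Q$ of $\Gamma$ as an extension of $W$ by $\Z^k$; the vanishing $H^2(W;\C^k)=0$ (finite group, uniquely divisible coefficients) makes the pushout split and embeds $\overline Q$ into $\C^k\rtimes W$, the complex points of a $k$-dimensional group of vector type; and your final reduction --- take the Zariski closure of $\overline Q$ (whose identity component is a linear subspace of $\C^k$, so still vector type) and induct on dimension by quotienting out any normal algebraic subgroup of positive dimension and infinite index, density guaranteeing the image stays infinite --- correctly lands on a pseudo-simple vector-type group of dimension at most $\dim G$. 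You are also right that the only delicate issue is the bound $k\le d$, since the free rank of $N$ itself is unbounded; your observation that tensoring with $\Q$ collapses everything onto constituents of the single integral representation $\rho$ is exactly what makes the dimension count work, and it is the same mechanism underlying Larsen's original argument.
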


Using Proposition 2.2, we can now prove that the quotient dimension of a hyperbolic homology sphere cannot be 1. 

\begin{proposition} \label{prop:2.3} Let $\Gamma = \pi_1(Y)$, where $Y$ is a hyperbolic homology sphere. $\Gamma$ has quotient dimension $\geq 2$.
\end{proposition}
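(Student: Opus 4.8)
The plan is to show that if $q_\Gamma = 1$, then $\Gamma$ has an infinite quotient sitting inside a one-dimensional linear algebraic group, and to rule out each possibility using the fact that $\Gamma = \pi_1(Y)$ for a hyperbolic homology sphere. First I would reduce to the pseudo-simple case: if $G$ is a linear algebraic group of minimal dimension $1$ with $G(\C)$ containing an infinite quotient of $\Gamma$, then by the discussion preceding Definition 2.1 every positive-dimensional normal subgroup of $G$ has the same dimension as $G$, so $G$ is pseudo-simple. A one-dimensional pseudo-simple group cannot be of non-abelian type (semisimple groups have dimension $\geq 3$), so by Lemma 2.1 and Proposition 2.2 we may assume $G$ is of vector type, i.e.\ $G^0 \cong \mathbb{G}_a$, the additive group. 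Since $G^0$ has finite index in $G$, an infinite quotient of $\Gamma$ inside $G(\C)$ intersects $G^0(\C) = \C$ in a finite-index, hence infinite, subgroup; pulling back, $\Gamma$ has a finite-index subgroup $\Gamma_0$ with an infinite quotient inside $(\C, +)$, which is torsion-free abelian. In particular the abelianization of $\Gamma_0$ is infinite, so $H_1(\Gamma_0;\Q) \neq 0$.

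The contradiction then comes from the homology-sphere hypothesis together with a standard fact about hyperbolic $3$-manifolds: a finite-index subgroup $\Gamma_0 \leq \Gamma$ corresponds to a finite cover $\widetilde{Y} \to Y$, which is again a closed hyperbolic $3$-manifold, and I claim $H_1(\widetilde{Y};\Q)$ can be forced to vanish (or at least that the relevant quotient cannot exist). The key input is that $Y$ is an \emph{integral} homology sphere: $H_1(Y;\Z) = 0$, so $\Gamma$ is a perfect group, $\Gamma = [\Gamma,\Gamma]$. A perfect group has no nontrivial abelian quotients, which already kills the case $\Gamma_0 = \Gamma$. For proper finite-index $\Gamma_0$ one must work harder, and here is where I expect the main obstacle: a finite cover of an integral homology sphere need not be a rational homology sphere. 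The natural fix is to exploit that the infinite quotient of $\Gamma_0$ lands in the \emph{additive} group $\mathbb{G}_a$, which is unipotent and normalized appropriately — so one should be able to promote this to an infinite quotient of $\Gamma$ itself inside a one-dimensional group of vector type, using that $\Gamma_0$ is normal of finite index (or by passing to the normal core) and that $\C$ is divisible, hence the induced/co-induced module argument lets the $\Gamma$-action be absorbed. Concretely: the image of $\Gamma_0$ in $\C$ generates a finitely generated $\Q$-vector space $V$ on which $\Gamma$ acts (via conjugation in $G$, which acts on $\mathbb{G}_a$ by scalars), giving a finite-dimensional $\Q[\Gamma]$-module with infinite image — and since the only algebraic characters available are to $\mathbb{G}_m$, while $\Gamma$ is perfect and thus has no nontrivial characters to $\mathbb{G}_m(\C) = \C^\times$, the action is trivial, so $\Gamma$ itself surjects onto an infinite subgroup of $(\C,+)$, contradicting perfectness of $\Gamma$.

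Thus the heart of the argument is: (i) $q_\Gamma = 1$ forces, after the Larsen reductions, an infinite quotient of $\Gamma$ into a vector-type group; (ii) such a group is abelian and torsion-free, while the conjugation action of $\Gamma$ factors through characters to $\mathbb{G}_m$ which are trivial on the perfect group $\Gamma$; (iii) hence $\Gamma$ has an infinite abelian quotient, contradicting $H_1(Y;\Z)=0$. I would organize the write-up as: Step 1, reduce to pseudo-simple of vector type (citing the minimality discussion, Lemma 2.1, Proposition 2.2); Step 2, observe vector type means $G^0 = \mathbb{G}_a$ up to isogeny and analyze the conjugation action; Step 3, invoke perfectness of $\pi_1$ of an integral homology sphere to conclude. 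The step I expect to require the most care is Step 2 — correctly handling the finite-index/normal-core bookkeeping and confirming that the outer action on $\mathbb{G}_a$ genuinely reduces to the character group $\mathbb{G}_m$, so that perfectness applies — everything else is either cited from Larsen or is the elementary fact that an integral homology sphere has perfect fundamental group.
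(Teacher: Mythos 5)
Your proposal is correct in outline and follows the same basic strategy as the paper: assume $q_\Gamma = 1$, use Larsen's reduction (Lemma 2.1 and Proposition 2.2) to place an infinite quotient of $\Gamma$ in a one-dimensional group $G$ of vector type, and then contradict $H_1(Y;\Z)=0$, i.e.\ perfectness of $\Gamma$. The difference lies in how the contradiction is extracted. The paper notes that $G/Z_G(G^o)$ acts faithfully on $\C$, hence is finite cyclic of order $d$, and that Zariski density forces $\Gamma$ to surject onto $\Z/d\Z$, which is declared incompatible with $H_1(Y)=0$ --- an argument that literally only bites when $d\geq 2$. Your character-to-$\mathbb{G}_m$ step is the same use of perfectness (it forces $d=1$), but you then go on to treat the resulting case in which $G^o\cong\mathbb{G}_a$ is central, which the paper's written proof passes over; this is the substantive case, and your instinct that divisibility of $\C$ is the saving fact is right. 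The one step to tighten is the final inference: from ``the conjugation action is trivial'' you assert that $\Gamma$ itself surjects onto an infinite subgroup of $(\C,+)$. The clean justification is that the central extension $1\to\mathbb{G}_a\to G\to\pi_0(G)\to 1$ splits because $H^2(\pi_0(G);\C)=0$ for the finite group $\pi_0(G)$ (essentially the computation the paper performs later in the proof of Theorem \ref{thm:main}(2)), so $G(\C)\cong\C\times\pi_0(G)$ and the projection to $\C$ has infinite image on the image of $\Gamma$, giving an infinite abelian quotient of $\Gamma$; alternatively, the image of $\Gamma$ is perfect and center-by-finite, so Schur's theorem makes its commutator subgroup finite, a contradiction with its infiniteness. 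With that line supplied your argument is complete, and the detour through the finite-index subgroup $\Gamma_0$ (and the worry about finite covers with large first homology) can simply be deleted, since it is never used once the character argument is in place.
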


\begin{proof} Assume to the contrary that $\Gamma$ has quotient dimension 1. By Proposition 2.2, we can assume that $\Gamma$ has an infinite quotient Zariski-dense in $G(\C)$, an extension of a finite group by $\C$. $G/Z_{G} (G^{o})$, the quotient of $G$ by the centralizer of the identity component $G^o$, acts faithfully on $\C$, so it is finite cyclic, say of order $d$. Then, $\Gamma = \pi_1 (Y)$ maps onto $\Z/d\Z$ via some map 
$$f: \Gamma \rightarrow \Z/d\Z.$$

\noindent Any homomorphism $h: \pi_1(Y) \rightarrow G,$ where $G$ is any abelian group, induces a homomorphism $h': \pi_1(Y)^{ab} \rightarrow G$. Since $Y$ is path-connected, we have that by Hurewicz's theorem, $H_1(Y)$ is the abelianization of $\pi_1(Y)$, so the surjective map $f: \Gamma \rightarrow \Z/d\Z$ induces the surjective homomorphism
$$\tilde{f}: \Gamma^{ab} = H_1(Y) \rightarrow \Z/d\Z.$$
However, $H_1(Y) = 0$ since $Y$ is a homology sphere, so it cannot map surjectively onto $\Z/d\Z$. We thus have a contradiction, so $q_{\Gamma} \geq 2$. 
\end{proof}

Together with Proposition 1.2, Proposition 2.3 concludes the proof of the first part of Theorem \ref{thm:main}.


\section{Quotient dimension can sometimes be 2.} \label{section:3}
In the case of quotient dimension 2, it is not directly clear whether or not a 3-manifold can even have $q_{\Gamma} = 2$. However, using GAP  \cite{GAP} as a computational tool and following in footsteps of Larsen's proof for the quotient dimension of $\Gamma = \Delta(2, 3, 8)$ \cite[2.4]{Larsen}, we find several families of homology spheres with quotient dimension 2, one of which is presented in this section, indicating that there do in fact exist 3-manifolds with quotient dimension 2.

In order to show that the quotient dimension of $\Gamma$ is some $n$, we need to show that $n$ is the minimal dimension of any linear algebraic group $G$ such that $G(\C)$, the group of $\C$-valued points of $G$, contains an infinite quotient group of $\Gamma$. In the case of quotient dimension 2, we note that it is sufficient to find a single linear algebraic group $G$ of dimension 2 such that $G(\C)$ contains an infinite quotient group of $\Gamma$. Then, by Theorem \ref{thm:main}(1), $\Gamma$ must have quotient dimension 2 (since 2 is the minimal possible dimension).  

\subsection{Constructing $G(\C)$} Consider the linear algebraic group $H$ that acts faithfully on $\C^2$ via a linear and polynomial action. Then, there exists a linear algebraic group $G$ of dimension 2 such that the semi-direct product $\C^2 \rtimes H = G(\C)$. We can thus restrict our focus to linear algebraic groups that act faithfully on $\C^2$ via a linear and polynomial action. 

We take this approach from \cite[2.4]{Larsen}. For example, to prove that $\Gamma_{2, 3, 8}$ has quotient dimension $2$, Larsen finds a commutator of infinite order in $\C^2 \rtimes \text{GL}_2 (\mathbb{F}_3)$, where there exists a surjective homomorphism $\Gamma_{2, 3, 8} \rightarrow  \text{GL}_2 (\mathbb{F}_3)$. Similarly, noting the surjective homomorphism $\Gamma_{2, 3, 12} \rightarrow \Z/6 \Z$, he proves that $q_{\Gamma_{2, 3, 12}} = 1$ by finding a commutator of infinite order in $\C \rtimes \Z/6 \Z$. It is in manner that we are able to prove that an infinite knot family has quotient dimension 2 (Theorem \ref{thm:main}(1)). Before we continue with the proof,  we make a few notes, recalling some useful definitions, properties, and facts. 

\subsubsection{Semi-direct Product} Since we will do a bit of work with semi-direct products, let us recall a few properties. Let $N$ and $H$ be groups, with $H$ acting on $N$, i.e. there exists a group homomorphism $\rho : H \rightarrow \text{Aut} (N)$. The external semi-direct product $G = N \rtimes H$ is equivalent as a set to the cartesian product $N \times H$, with the product rule
$$(a, b) \semi (a', b') = (a (\rho(b)(a')), bb').$$
We can rewrite the action $\rho(b) a' = b \cdot a'$, so that we then have
$$(a, b) \semi (a', b') = (a (b \cdot a'), bb').$$
Since we mostly work with additive abelian groups, we use the following product rule: for $N$ abelian with the binary operation addition, we have
$$ (a, b) \semi (a', b') = (a + (b \cdot a')), bb').$$
See \cite{Conrad} for more details on semi-direct products.

\subsubsection{Lifting $A_5$ to the Binary Icosahedral Group $2I$}
Suppose $\Gamma$ maps surjectively onto $A_5$ (we handle the case in which $\Gamma$ does \textit{not} map onto the alternating group $A_5$ in Section 4). Unfortunately, $A_5$ does not act on $\C^2$ as it cannot be embedded into $\GL(2, \C)$, so we cannot construct the extension $\C^2 \rtimes A_5$. However, the \textit{double cover} of $A_5$ (to which the surjective homomorphism $\Gamma \rightarrow A_5$ lifts), the binary icosahedral group $2I$, \textit{does} act on $\C^2$ as a subgroup of $\SU(2) \subset \GL(2, \C)$. The group has presentation
$$2I = \langle x, y \ | \ (xy)^2 = x^3 = y^5 = 1 \rangle,$$
\noindent and order 120. $2I$ acts freely on $S^3$ since it is a finite subgroup of the group $SU(2) \cong S^3$ which can be thought of as the unit quaternions $\text{Spin}(3)$. The binary icosahedral subgroup in $\text{Spin}(3)$ is the inverse image of $A_5$ under the double covering map. Since $A_5$ is simple, $H_1(A_5; \Z) = 0$. Also, we have that $H_2(A_5; \Z) = \Z/2$. Thus, it is the unique maximal central extension of $A_5$, which is simple, so it fits into the short exact sequence
$$1 \rightarrow \Z/2 \rightarrow 2I \rightarrow A_5 \rightarrow 1.$$
This sequence does not split, so $2I$ is not a semi-direct product of $\Z/2$ by $A_5$. The center of $2I$ is the subgroup $\Z/2$, so the inner automorphism group is isomorphic to $I$. The full automorphism group is isomorphic to the symmetric group $S_5$, just as $I \cong A_5$. Thus, any automorphism of $A_5$ lifts to an automorphism of $2I$ since the lift of the generators of $I$ are generators of $2I$. 

\subsubsection{Quaternions}
The generators of the binary icosahedral group are given in terms of unit quaternions \cite{coxeter2013generators} \cite{quaternion}. The quaternion number system extends the complex numbers $\C$, with basis $\{ 1, i \}$, to $\C^2$ by adding two new basis elements $j$ and $k$, given by the equation
$$i^2 = j^2 = k^2 = ijk = -1.$$

Indeed, $2I = \langle x, y \ | \ (xy)^2 = x^3 = y^5 = 1 \rangle,$ where 
$$x = \frac{1+i+j+k}{2} \hspace{0.5in} y = \frac{\varphi + i/\varphi +j}{2},$$
and where $\varphi$ is the golden ratio given by $\varphi = \frac{1 +\sqrt{5}}{2}$.

\subsubsection{2-Bridge Knots}
Let $K = K(p,q)$ be a hyperbolic 2-bridge knot. For $i \in \N$ with $i \leq p$, let $e_i = (-1)^{\lfloor \frac{iq}{p} \rfloor}$ and let $\sigma = \sum_{i=1}^{p-1} e_i$. Then, the fundamental group $\pi_1(S^3 - K)$ can be presented as a group
$$\Gamma = \langle \, x, y \, \, | \, \, wxw^{-1}y^{-1} \rangle,$$
where $w = x^{e_1} y^{e_2} \cdots x^{e_{p-2}} y^{e_{p-1}}$.
The meridian is given by $x$ and the Seifert longitude is given by $x^{-2 \sigma}w_{*}w$ \cite[Proposition 1]{10.1112/plms/s3-24.2.217}, where
$$w_{*} = y^{e_1} x^{e_2} \cdots y^{e_{p-2}} x^{e_{p-1}}.$$

We work with Dehn surgeries on knots, whose fundamental groups are closely related to knot groups. Let $\mu$ be the meridian of the knot and $\lambda$ the Seifert longitude. The fundamental group $\pi_1(S_{p/q}^3(K)$) is isomorphic to $\pi_1(S^3 - K)/\langle \mu^p \lambda^q \rangle$, where $\langle \mu^p \lambda^q \rangle$ refers to the subgroup generated by $\mu^p \lambda^q$. In our case, we are interested in 1/n, or integral, surgeries. Let $\Gamma = \pi_1 (S_{1/n}^3)$. Then, the presentation of $\Gamma$ is given by
$$\Gamma = \langle \, x, y \, \, | \, \, wxw^{-1}y^{-1}, \ x(x^{-2 \sigma}w_{*}w)^{n} \, \rangle.$$

With 3.1.1-4, we are now ready to prove Theorem \ref{thm:main}(1).

\vspace{0.1in}

\noindent \textbf{Theorem \ref{thm:main}(1).} \textit{Let $S_{1/n}^3 = S^3 - K(p,q)$ be the $1/n$ surgery associated to the hyperbolic 2-bridge knot $K(p,q)$ for $n= \frac{10k-1}{7} \in \Z$. Let $\Gamma = \pi_1 (S_{1/n}^3)$. If $(p, q) = (40N+27, 20N+13)$ for $N \in \N$, then $\Gamma$ has quotient dimension 2.}

\vspace{0.1in}

\begin{proof}
We note first the surjective map $f: \Gamma \rightarrow 2I$ such that 
\begin{equation*}
x \mapsto -b^2 \hspace{1in} y \mapsto a^2b^2a,
\end{equation*}
checked computationally with GAP \cite{GAP}. We define $F: \Gamma \rightarrow \C^2 \rtimes 2I$ as
\begin{equation*}
x \mapsto (\vec{u}, f(x)) \hspace{1in} y \mapsto (\vec{v}, f(y))
\end{equation*}
for some $\vec{u}, \vec{v} \in \C^2$. Let $\rho: 2I \rightarrow \C^2$ denote the non-trivial representation induced by the natural action of $2I$ on $\C^2$ via multiplication. \\

\noindent We want to find $\vec{u}$ and $\vec{v}$ such this mapping is a homomorphism, so we need to check that $F(r_i) = \text{Id}_{\C^2 \rtimes 2I} = (\vec{0}, 1)$ for all the relators $r_i$ of $\Gamma$. Recall from 3.1.4 that the relators of $\Gamma$ are
\begin{eqnarray*}
r_1 &=& wxw^{-1}y^{-1} = 1\\
r_2 &=& x(x^{-2 \sigma}w_{*}w)^{n} = 1,
\end{eqnarray*}
so we want to find $F$ such that
\begin{eqnarray*}
&F(r_1) = F(1) = (\vec{0}, 1)&\\
&F(r_2) = F(1) = (\vec{0}, 1).& 
\end{eqnarray*}

\noindent Since $F(r_1) = F(wxw^{-1}y^{-1}) = (a\vec{u}+ b\vec{v}, f(wxw^{-1}y^{-1}))$ for some $a, b \in \C$, and similarly $F(r_r) = F(x(x^{-2 \sigma}w_{*}w)^{n}) = (c\vec{u}+ d\vec{v}, f(x(x^{-2 \sigma}w_{*}w)^{n}))$ for some $c, d \in \C$, we note that for $F$ to be a homomorphism, we must have (1) $f(r_1) = f(r_2) = 1$, and (2) $a\vec{u}+ b\vec{v} = c\vec{u}+ d\vec{v} = \vec{0}$, i.e. for $ad = bc$. Indeed, in defining $f$ above, condition (2) is already satisfied (via GAP computations \cite{GAP}), so we limit ourselves to showing that (1) is satisfied. \\

\noindent We start by proving that $f(r_1) = f(w x w^{-1} y^{-1}) = 1$. We can compute $w$ directly, noting first that since $2(20N+13) < 40N+27$, we have
\begin{eqnarray*}
e_1 &=& (-1)^{\lfloor \frac{20N+13}{40N+27} \rfloor} = (-1)^0 =1\\
e_2 &=& (-1)^{\lfloor \frac{40N+26}{40N+27} \rfloor} = (-1)^0 = 1\\
e_3 &=& (-1)^{\lfloor \frac{3(20N+13)}{40N+27} \rfloor} = (-1)^1 = -1\\
e_4 &=& (-1)^{\lfloor \frac{4(20N+13)}{40N+27} \rfloor} = (-1)^1 = -1\\
e_5 &=& (-1)^{\lfloor \frac{5(20N+13)}{40N+27} \rfloor} = (-1)^0 = 1\\
&\vdots&\\
e_{40N+24= p-3} &=& (-1)^{\lfloor \frac{(10N+6) \cdot 4 \cdot q}{p} \rfloor} = (-1)^1 = -1\\
e_{40N+25= p-2}  &=& 1 \\
e_{40N+26 = p-1} &=& 1. 
\end{eqnarray*}
The first $p-3$ terms cancel, so we have that $\sigma = \sum_{i=1}^{p-1} e_i = e_{p-2} + e_{p-1} = 2$. It follows from the definitions in 3.1.4 that
$$w = xyx^{-1}y^{-1} \cdots xyx^{-1}y^{-1}xy \hspace{0.5in} w_{*} = yxy^{-1}x^{-1} \cdots yxy^{-1}x^{-1}yx.$$
Since $p-3 = 40N+24= 4(10N+6),$ we note that we can rewrite $w$ as
\begin{equation*}
w = (xyx^{-1}y^{-1})^{10N} (xyx^{-1}y^{-1})^{6} xy.
\end{equation*}
It will useful for us to write $w$ in this manner. Let $M = x y x^{-1} y^{-1}$. Then,
$$w = M^{10N} M^6 xy.$$
We then rewrite the relator $r_1$ as 
\begin{eqnarray*}
r_1 &=&  M^{10N} M^6 xy \cdot x \cdot (M^{10N} M^6 xy)^{-1} \cdot y^{-1}\\
 &=&  M^{10N} M^6 xyxy^{-1}x^{-1} M^{-6} M^{-10N} y^{-1}\\
  &=&  M^{10N} M^6 x(xyx^{-1}y^{-1})^{-1} M^{-6} M^{-10N} y^{-1}\\
  &=&  M^{10N} M^6 x M^{-1} M^{-6} M^{-10N} y^{-1}\\
    &=&  M^{10N} M^6 x M^{-7} M^{-10N} y^{-1}.
\end{eqnarray*}
Note that $f(M)$ has order $10$, i.e. $f(M)^{10} = 1$, checked via GAP \cite{GAP}:

\begin{figure}[H]
\centering
\vspace{-0.15in}
\includegraphics[scale=0.7]{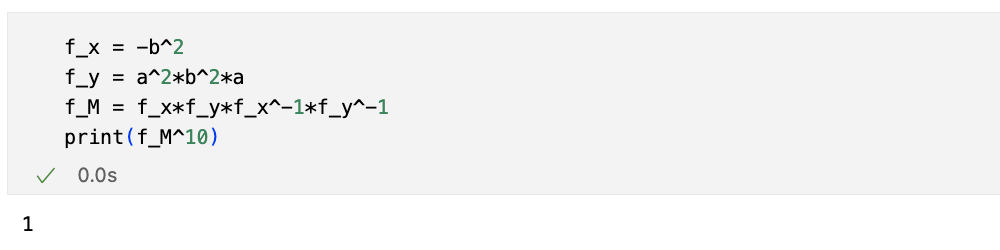}
\vspace{-0.2in}
\end{figure}

\noindent Thus, $f(r_1)$ is reduced to 
\begin{eqnarray*}
f(r_1) &=& f(M^{10N}) f(M^6) f(x) f(M^{-7}) f(M^{-10N}) f(y^{-1}) \\
&=& f(M^6) f(x) f(M^{-7}) f(y^{-1}).
\end{eqnarray*}

\noindent This computation is important because it tells us that although $r_1$ depends on the $n$ from $K(40N+27, 20N+13)$, $f(r_1)$ \textit{does not depend on N}, so the following result holds for the \textit{infinite} knot family. Noting that $M^6 x M^{-7} y^{-1} = 1$ (computed with GAP \cite{GAP}, see below), we conclude that $f$ maps the first relator to the identity, satisfying the half of condition (1).

\begin{figure}[H]
\centering
\vspace{-0.2in}
\includegraphics[scale=0.7]{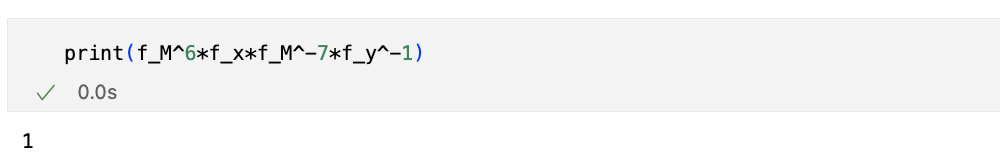}
\vspace{-0.2in}
\end{figure}

\noindent As noted above, $f$ is picked such that condition $(2)$ is satisfied, but we do note that this is only possible to conclude for the entire knot family because $w$ does not depend on $N$, so the semi-direct product does not either. Indeed, using GAP \cite{GAP} to compute the total semi-direct product, we have
$$F(r_1) = F(M^6 x M^{-7} y^{-1}) = F(M^6) \semi F(x) \semi F(M^{-7}) \semi F(y^{-1}) = (\vec{0},1).$$

\noindent We now show that $f(r_2) = 1$. First note that 
$$f(M)^5 = f(xyx^{-1}y^{-1})^{5} = -1 \hspace{0.35in} f(M) = f(xyx^{-1}y^{-1})^{10} = 1,$$
and similarly, for $\tilde{M} = y^{-1}x^{-1}yx$,
$$f(\tilde{M})^5 = f(y^{-1}x^{-1}yx)^{5} = -1 \hspace{0.3in} f(\tilde{M}^{10}) = f(y^{-1}x^{-1}yx)^{10} = 1.$$
Using the above two calculations, we have \begin{eqnarray*}
f(w_* w) &=& f(yx(y^{-1}x^{-1}yx)^{10N}(y^{-1}x^{-1}yx)^{6}(xyx^{-1}y^{-1})^{6} (xyx^{-1}y^{-1})^{10N}xy)\\
 &=& f(yx(y^{-1}x^{-1}yx)^{6}(xyx^{-1}y^{-1})^{6} xy)\\
  &=& f(yx(y^{-1}x^{-1}yx)) f(y^{-1}x^{-1}yx)^{5} f(xyx^{-1}y^{-1})^5 f(xyx^{-1}y^{-1}) (xy) \\
   &=& f(yx(y^{-1}x^{-1}yx)) \cdot (-1) \cdot (-1) \cdot f(xyx^{-1}y^{-1}) (xy)\\
      &=& f(yx \tilde{M} Mxy),
\end{eqnarray*}
so $f(w_{*}w)$ does not depend on $N$. Moreover, GAP \cite{GAP} tells us that 
$$f(w_* w) = f(yx \tilde{M} Mxy) = f(x) \ \ \ \ \ \text{(see below)}.$$
\begin{figure}[H]
\centering
\vspace{-0.05in}
\includegraphics[scale=0.7]{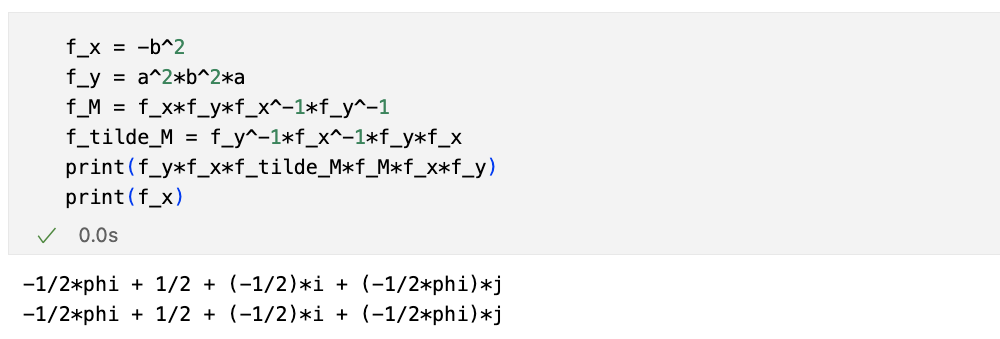}
\vspace{-0.15in}
\end{figure}

\noindent Recalling that $\sigma = 2$, we can then rewrite the second relator as
\begin{eqnarray*}
f(r_2) &=& f(x (x^{-2\sigma} w_{*}w)^n) \\
&=& f(x (x^{-4} x)^n) \\
&=& f(x (x^{-3})^n).
\end{eqnarray*}
Since $f(x)$ has order 10 (checked with GAP), $f(x^{-3}) = f(x^7)$. Recall by hypothesis that $n= \frac{10k-1}{7} \in \Z$ for $k\in \Z$. Indeed, then we have
$$f(r_2) = f(x (x^{7})^n) =  f(x^{1+7n}) = f(x)^{1+7n} = f(x)^{10k} = (f(x)^10)^k = 1.$$

\noindent Similar to before, since $f(r_2)$ does not depend on $n$, the semi-direct product does not either. Using GAP \cite{GAP}, we compute the total semi-direct product 
$$F(r_2) = F(x(x^{-4}w_{*}w)^n) = F(x) \semi F(x^{-4}w_{*}w)^n = (\vec{0},1).$$

\vspace{0.2in}

\noindent Thus, we have seen that $F(r_1)$ and $F(r_2)$ both map to the identity in $\C^2 \rtimes 2I$ regardless of how $\vec{u}$ and $\vec{v}$ are chosen in $\C^2$. Moreover, for any two $\vec{u}, \vec{v} \in C^2$ such that at least one is non-zero, $F$ is non-trivial. \\

\noindent Let $\vec{u}\in \C^2$ and $\vec{v} =\vec{0} \in \C^2$. Consider again $F: \Gamma \rightarrow \C^2 \rtimes 2I$, where now
$$x \mapsto (\vec{u}, -b^2) \hspace{1in} y \mapsto (0, a^2b^2a).$$
We show that the commutator of $x^5$ with $y$ has infinite order under $F$.

\newpage
\noindent We begin by noting that 

\vspace{-.2in}
\begin{eqnarray*}
F(x^5) &=& F(x) \semi F(x) \semi F(x) \semi F(x) \semi F(x) \\
&=& (\vec{u}, -b^2) \semi (\vec{u}, -b^2) \semi (\vec{u}, -b^2) \semi (\vec{u}, -b^2) \semi (\vec{u}, -b^2) \\
&=& (\vec{u}(1 - b^2 + b^4 - b^6 +b^8), -b^{10})  \\
&=& (\vec{u}(1 - b^2 + b^4 - b^6 +b^8), -1),
\end{eqnarray*}

\noindent where $1 - b^2 + b^4 - b^6 +b^8 = 1 + (\varphi - 2) \cdot i + (-\varphi + 1) \cdot j \not= 0.$ Then, 

\vspace{-.2in}
\begin{eqnarray*}
F(x^5yx^{-5}y^{-1}) &=& F(x^5) \semi F(y) \semi F(x^{-5}) \semi F(y^{-1}) \\
&=& (\vec{u}(1 - b^2 + b^4 - b^6 +b^8), -1) \semi (0, a^2b^2a) \\
&& \ \semi (\vec{u}(1 - b^2 + b^4 - b^6 +b^8), -1) \semi (0, a^{-1}b^{-2}a^{-2}) \\
&=& (\vec{u}(1 - b^2 + b^4 - b^6 +b^8), -1) \semi (0, a^2b^2a) \\
&& \ \ \ \ \ \ \ \ \ \ \semi (\vec{u}(1 - b^2 + b^4 - b^6 +b^8), -a^{-1}b^{-2}a^{-2}) \\
&=& (\vec{u}(1 - b^2 + b^4 - b^6 +b^8), -1) \\
&& \ \ \ \ \ \ \ \ \ \ \semi (a^2b^2a \cdot \vec{u}(1 - b^2 + b^4 - b^6 +b^8), -1) \\
&=& (\vec{u}(1 - b^2 + b^4 - b^6 +b^8) - a^2b^2a \cdot \vec{u}(1 - b^2 + b^4 - b^6 +b^8), 1) \\
&=& (\vec{u}(1 - b^2 + b^4 - b^6 +b^8)(1-a^2b^2a), 1),
\end{eqnarray*}

\noindent where $(1 - b^2 + b^4 - b^6 +b^8)(1-a^2b^2a) = \varphi + (-\varphi + 1) \cdot j + k \not= 0$. Thus, 
$$F(x^5yx^{-5}y^{-1})^n = (n \cdot \vec{u}(1 - b^2 + b^4 - b^6 +b^8)(1-a^2b^2a), 1).$$
Since the first coordinate of the above will never be zero for any $n \in \N$, the commutator of $x^5$ and $y$ is of infinite order in $\C^2$. \\

\noindent Thus, $\Gamma$ has quotient dimension $2$.
\end{proof}


\section{Quotient dimension can sometimes be 3.} \label{section:4}
Below, we state and prove a theorem that gives us a sufficient condition for the quotient dimension of a hyperbolic homology sphere to be 3. \\

\noindent \textbf{Theorem \ref{thm:main}(2).} Let $Y$ be a hyperbolic homology sphere with $\Gamma = \pi_1 (Y)$. If $\Gamma$ does not map onto $A_5$, then $\Gamma$ has quotient dimension 3.

\begin{proof}
Assume to the contrary that the quotient dimension is less than 3. We can then assume without loss of generality that the quotient dimension is 2, so by Proposition 2.2, there exists a 2-dimensional pseudo-simple group $G$ of vector type such that $G(\mathbb{C})$ contains a quotient of $\Gamma$ of vector type. 

Then, the quotient group $G/Z_G(G^o)$ is a homomorphic image of $\Gamma$, and it acts faithfully on $\mathbb{C}^2$. By the classification of finite subgroups of $PGL_2(\mathbb{C}),$ every finite subgroup of $GL_2(\mathbb{C})$ which is not solvable maps onto the alternating group $A_5$ \cite{CJPS_1998__16_} \cite{zbMATH01466287}. However, $\Gamma$ does not map onto $A_5$ by hypothesis, so $z=1$ for all $z \in G/Z_G(G^o)$, so the quotient group $G/Z_G(G^o)$ is trivial. $G/Z_G(G^o) = \{ 1 \}$ implies that $G \cong Z_G (G^o),$ so $G^o \subseteq Z(G)$. 

Consider the following short exact sequence:
$$1 \rightarrow G^o \rightarrow G \rightarrow \pi_0 (G) \rightarrow 1.$$
Since $G^o \subseteq Z(G)$, it follows that $G$ is a central extension of $\pi_0 (G) $ by $G^o$. There is a one-to-one correspondence between the central extension of $\pi_0 (G) $ by $G^o$ and the second cohomology group $H^2(\pi_0(G); G^o)$ \cite[IV.3.12]{brown1982cohomology}, so
$$1 \rightarrow G^o \rightarrow G \rightarrow \pi_0 (G) \rightarrow 1$$
is characterized by $H^2(\pi_0(G); G^o) = H^2(\pi_0(G); \C^2)$. We thus show that this extension is trivial by showing that $H^2(\pi_0(G); \C^2) = 0$. 

Since $G$ is an algebraic group, its identity component $G^o$ has finite index in $G$, so $G/G^o \cong \pi_0 (G)$ is finite. For a finite group $G$, and for any finitely generated $\Z G$-module $M$, the cohomology groups $H_n (G;M)$ are finite abelian for all $n \geq 1$ \cite[4.3]{Webb}. $\Z$ is the trivial $\Z G$-module (with G acting trivially) and also finitely generated, and since $\pi_0 (G)$ is finite, we have that
$$H_n (\pi_0 (G); \Z)$$
is finite abelian for all $n \geq 1$. 

We now turn back to $H^2 (\pi_0 (G); \C^2)$. Recall that the Universal Coefficient Theorem for cohomology states that for any abelian group $A$ and a CW complex  $X$, there is a short exact sequence:
$$0 \to \mathrm{Ext}^1(H_{n-1}(X; \mathbb{Z}), A) \to H^n(X; A) \to \mathrm{Hom}(H_n(X; \mathbb{Z}), A) \to 0.$$

In our context, we consider the group $\pi_0(G)$ and the coefficients $\mathbb{C}^2$. The theorem states that there is an exact sequence:
$$0 \to \mathrm{Ext}(H_1(\pi_0(G); \mathbb{Z}), \mathbb{C}^2) \to H^2(\pi_0(G); \mathbb{C}^2) \to \mathrm{Hom}(H_2(\pi_0(G); \mathbb{Z}), \mathbb{C}^2) \to 0.$$
We analyze each term in this exact sequence in part:
\begin{enumerate}
    \item $\mathrm{Hom}(H_2(\pi_0(G); \mathbb{Z}), \mathbb{C}^2)$: Recall that $H_2(\pi_0(G); \mathbb{Z})$ is finite abelian. Since any homomorphism from 
    a finite group to $\C^2$ is the trivial because $\C^2$ is torsion-free and of infinite order, we have
    $$\mathrm{Hom}(H_2(\pi_0(G); \mathbb{Z}), \mathbb{C}^2) = 0.$$
    \item $\mathrm{Ext}(H_1(\pi_0(G); \mathbb{Z}), \mathbb{C}^2)$: We use the following two properties for calculating $\mathrm{Ext}(H, G)$, where $H$ is finitely generated and $G$ is any group:
        \begin{enumerate}
        \item $\mathrm{Ext}(H \oplus H', G) = \mathrm{Ext}(H, G) \oplus \mathrm{Ext}(H', G)$
        \item $\mathrm{Ext}(\Z / n \Z, G) = G/nG$.
        \end{enumerate}
        Since $H_1(\pi_0(G); \Z)$ is finite abelian, by the fundamental theorem of finite abelian groups, it can be expressed as the direct sum of cyclic subgroups of prime-power order. Thus, 
        $$H_1(\pi_0(G); \mathbb{Z}) = C_{p_1} \oplus C_{p_2} \oplus \cdots \oplus C_{p_n}.$$
        Every finite cyclic group of order $p_i$ is isomorphic to $\Z/p_i \Z$, so 
        $$H_1(\pi_0(G); \mathbb{Z}) = \Z/p_1 \Z \ \oplus \ \Z/p_2 \Z \ \oplus \ \cdots \oplus \ \Z/p_n \Z.$$
        Thus, by properties (a) and (b), we have 
        $$\hspace{0.3in} \mathrm{Ext}(H_1(\pi_0(G); \mathbb{Z}), \mathbb{C}^2) = \C^2/p_1 \C^2 \ \oplus \ \C^2/p_2 \C^2 \ \oplus \ \cdots \oplus \ \C^2/p_n \C^2.$$
        Since $\C^2/n\C^2$ is trivial for any $n \in \Z, n \not = 0$, we finally have that 
        $$\mathrm{Ext}(H_1(\pi_0(G); \mathbb{Z}), \mathbb{C}^2) = 0.$$
\end{enumerate}
Thus, the short exact sequence becomes 
$$0 \to 0 \to H^2(\pi_0(G); \mathbb{C}^2) \to 0 \to 0,$$
so $H^2(\pi_0(G); \mathbb{C}^2) = 0$. Since cohomology and homology are dual in this context, we use the fact that the dual space of a trivial cohomology group is a trivial homology group. Therefore, $H_2(\pi_0(G); \mathbb{C}^2) = 0.$

Since the extension is trivial, we have that i.e. $G \cong G^o \times \pi_0 (G).$ Thus, $G^o \cong G / \pi_0 (G).$ Since we assumed that there exists a homomorphism $\Gamma \rightarrow G$, $G^o$ as a quotient of $G$ admits a non-trivial homomorphism from $\Gamma$
$$\Gamma \rightarrow G^o \cong G/\pi_0 (G),$$
which is a contradiction.
\end{proof}

Theorem \ref{thm:main}(2) gives us a condition for determining when a hyperbolic homology sphere has quotient dimension 3, but the existence of such homology spheres is not yet evident. Below, we provide an example of a 3-dimensional hyperbolic homology spheres of quotient dimension three with proof. \\

\noindent (1) \noindent Let $|n| \geq 2$. Consider $1/n$ Dehn's surgery on the $4_1$ knot:
$$\pi_1( S_{1/n}^3(4_1) )= \langle \, a, b \, \, | \, \, ab^3ab^{-1}a^{-2}b^{-1}, \ ab\cdot(b^{-1}a^{-1}b^{-1}aba^{-1}ba)^n \, \rangle$$
\noindent Let us first consider the fundamental group of the knot complement 
$$ \pi_1 (S^3 - 4_1) = \langle \, a, b \, \, | \, \, ab^3ab^{-1}a^{-2}b^{-1} \, \rangle.$$ 
Since $ \pi_1 (S^3 - 4_1)$ does not map onto $A_5$ (checked below by GAP \cite{GAP}),\\
\begin{minipage}{0.75\textwidth}
\begin{figure}[H]
\centering
\includegraphics[scale=0.45]{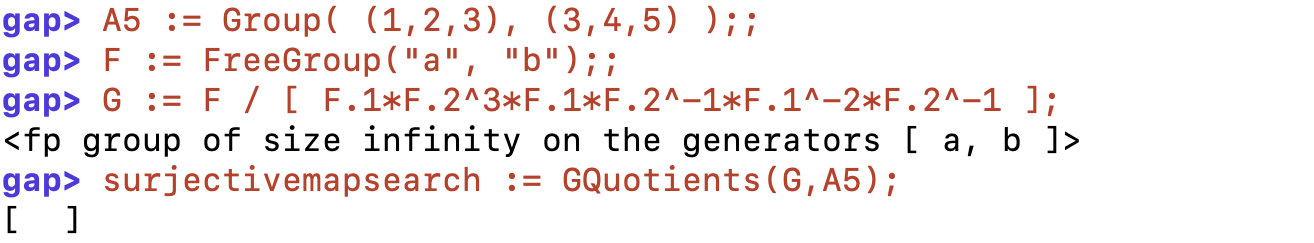}
\end{figure}
\noindent 
\end{minipage}
\begin{minipage}{0.25\textwidth}
\begin{figure}[H]
\centering
\vspace{-.15in}
\includegraphics[scale=0.05]{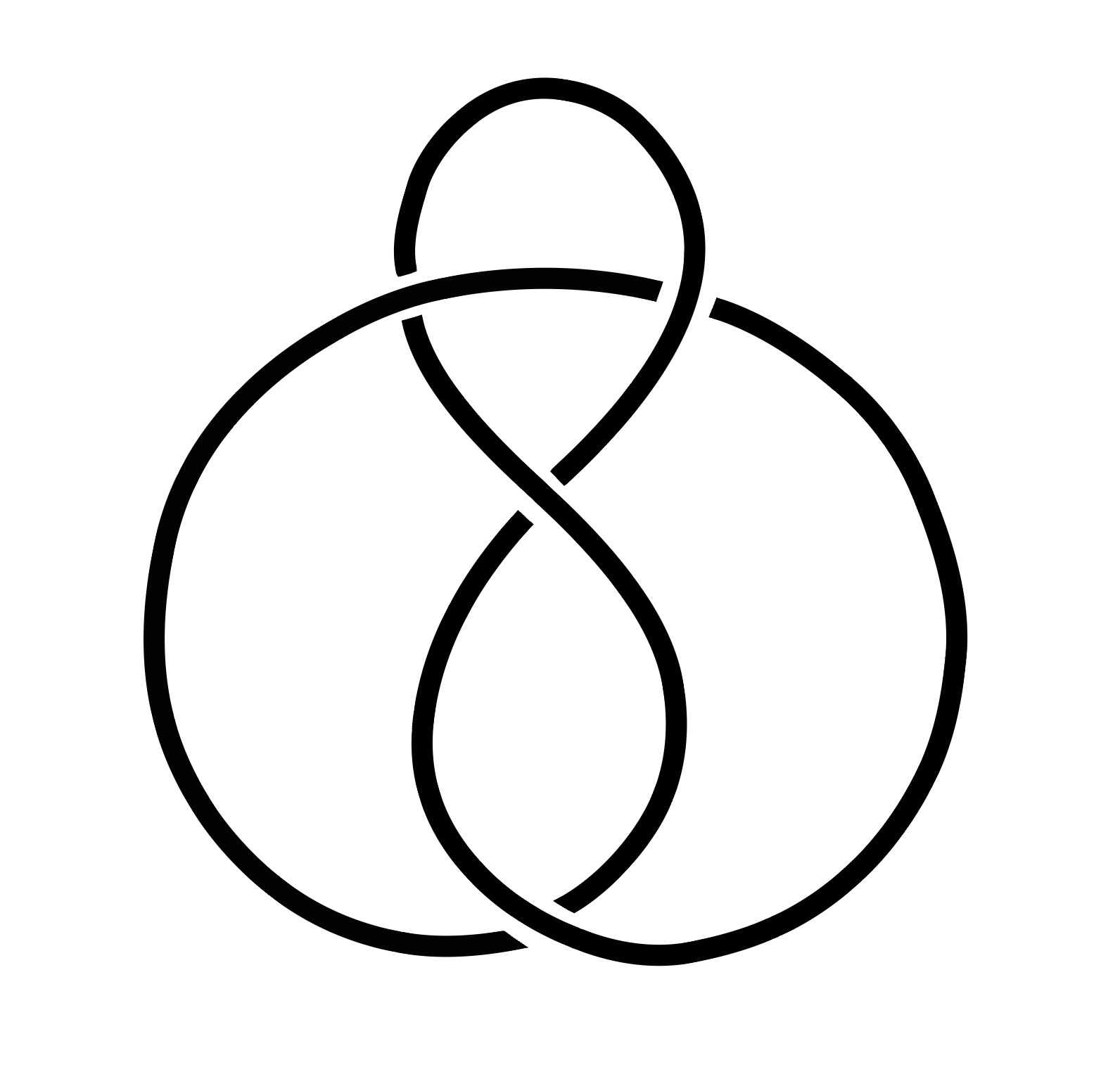}
\vspace{.05in}
\end{figure}
\end{minipage}
the fundamental group of $1/n$ surgery on $S^3 - 4_1$ cannot map onto $A_5$ either. Thus, $\pi_1(S_{1/n}^3(4_1)) \not \twoheadrightarrow A_5$, and so, by Theorem \ref{thm:main}(2), for $|n| \geq 2$, $S_{1/n}^3(4_1)$ has quotient dimension 3. We have thus found our first example of a 3-dimensional hyperbolic manifold with quotient dimension 3.

Continuing in this manner, we find that there are quite a few 3-dimensional hyperbolic manifolds with quotient dimension 3. Indeed, the table below shows a list of knots $K$ with crossing number 6-9 such that $S_{1/n}^3(K)$ has quotient dimension 3 (for integers $n$ such that the filling is hyperbolic).
 
\begin{multicols}{2}
\begin{table}[H]
\begin{tabular}{|l|l|}
\hline
Name  & Two-Bridge Notation \\ \hline
$6_1$ & $[9, 4]$            \\ \hline
$6_2$ & $[11, 3]$           \\ \hline
$7_2$ & $[11, 5]$           \\ \hline
$7_3$ & $[13, 3]$           \\ \hline
$7_5$ & $[17, 5]$           \\ \hline
$7_6$ & $[19, 7]$           \\ \hline
$7_7$ & $[21, 8]$           \\ \hline
$8_1$ & $[13, 6]$           \\ \hline
$8_2$ & $[17, 3]$           \\ \hline
$8_3$ & $[17, 4]$           \\ \hline
$8_4$ & $[19, 14]$           \\ \hline
$8_9$ & $[13, 6]$           \\ \hline
$8_{11}$ & $[17, 3]$           \\ \hline
$8_{12}$ & $[17, 4]$           \\ \hline
$8_{13}$ & $[19, 14]$           \\ \hline
$8_{14}$ & $[31, 12]$           \\ \hline
\end{tabular}
\end{table}
\begin{table}[H]
\begin{tabular}{|l|l|}
\hline
Name  & Two-Bridge Notation \\ \hline
$9_2$ & $[15, 7]$           \\ \hline
$9_3$ & $[19, 3]$           \\ \hline
$9_4$ & $[21, 5]$           \\ \hline
$9_7$ & $[29, 9]$           \\ \hline
$9_9$ & $[31, 7]$           \\ \hline
$9_{10}$ & $[33, 23]$           \\ \hline
$9_{11}$ & $[33, 14]$           \\ \hline
$9_{12}$ & $[35, 13]$           \\ \hline
$9_{14}$ & $[37, 8]$           \\ \hline
$9_{15}$ & $[39, 16]$           \\ \hline
$9_{17}$ & $[39, 14]$           \\ \hline
$9_{18}$ & $[41, 17]$           \\ \hline
$9_{19}$ & $[41, 16]$           \\ \hline
$9_{21}$ & $[43, 12]$           \\ \hline
$9_{26}$ & $[47, 18]$           \\ \hline
$9_{27}$ & $[49, 19]$           \\ \hline
\end{tabular}
\end{table}
\end{multicols}

The examples shown above of infinite families of integral homology spheres with quotient dimension 3, alongside Theorem \ref{thm:main}(1), conclude the proof of the second part of Theorem \ref{thm:main}.


\section{Open Questions}
Although the examples above prove that there exists 3-dimensional hyperbolic manifolds with quotient dimension 3, the natural next question remains: what is more common, quotient dimension 2 or 3? For $\Gamma = \pi_1(Y)$, this reduces to the following questions:
\begin{itemize}
\item How often does one have a surjection from $\Gamma$ onto $A_5$?
\item If such a surjection exists, how often does it imply $q_{\Gamma} = 2$?
\end{itemize}
While the notion of quotient dimension is itself restrictive — it only tells us about the distribution of regular finite coverings — more information about computing its value for hyperbolic 3-manifolds would improve our understanding of the symmetries of hyperbolic 3-manifolds. 


\section{Code} 

\noindent See \href{https://github.com/ms6166/quotient-dimension.git}{https://github.com/ms6166/quotient-dimension.git} for the full code.


\section*{Acknowledgements}
This paper is based on a section of my undergraduate senior thesis at Columbia University. I would like to thank my advisor, Professor Francesco Lin, for his invaluable guidance, support, and patience. The research for this project was started during the 2023 Columbia Math REU, and it was supported by the Columbia Science Research Fellowship.


\bibliographystyle{plain}
\bibliography{finalmanuscript}

\end{document}